\theoremstyle{plain} 
\newtheorem{theorem}{Theorem}[section]
\newtheorem{corollary}[theorem]{Corollary}
\newtheorem{proposition}[theorem]{Proposition}
\theoremstyle{definition} 
\newtheorem{definition}[theorem]{Definition}
\renewcommand{\Re}{\operatorname{Re}}
\begin{document}

\title{Property $(T_B)$ and Property $(F_B)$ restricted to a representation without non-zero invariant vectors}

\author{Mamoru Tanaka\footnote{This research was supported by Global COE Program "Weaving Science Web beyond Particle-Matter Hierarchy", MEXT, Japan}}

\date{}

\maketitle

\begin{abstract}
In this paper, we give a necessary and sufficient condition for which a finitely generated group has a property like Kazhdan's Property $(T)$ restricted to one isometric representation on a strictly convex Banach space without non-zero invariant vectors. 
Similarly, we give a necessary and sufficient condition for which a finitely generated group has a property like Property $(FH)$ restricted to the set of the affine isometric actions whose linear part are one isometric representation on a strictly convex Banach space without non-zero invariant vectors. 
If the Banach space is the $\ell \sp p$ space ($1<p<\infty$) on a finitely generated group, 
these conditions are regarded as an estimation of the spectrum of the $p$-Laplace operator on the $\ell \sp p$ space and on the $p$-Dirichlet finite space respectively. 
\end{abstract}

\small 

\noindent 
{\bf Mathematics Subject Classification (2010).} Primary 20F65; Secondary 46B04, 47H10. \\

\noindent 
{\bf Keywords.} Finitely generated groups, isometric action, strictly convex Banach spaces.

\normalsize

\section{Introduction}

A finitely generated group $\Gamma$ is said to have Kazhdan's Property $(T)$, if every irreducible unitary representation $(\pi , H)$ does not have almost fixed point, that is, there exists a positive constant $C$ such that 
$$ \max_{\gamma \in K} \| \pi(\gamma )v-v\| \ge C\|v\| $$ 
for all $v\in H$, where $K$ is a finite generating subset of $\Gamma$.
Kazhdan's Property $(T)$ has played important roles in many different subjects (see \cite{bibBHV}). 
A finitely generated group is said to have Property $(FH)$, 
if every affine isometric action on an infinite dimensional Hilbert space has a fixed point.  
It is known that a finitely generated group has Kazhdan's Property $(T)$ if and only if it has Property $(FH)$.  

Bader, Furman, Gelander, and Monod \cite{bibBFGM} introduce a generalization of Kazhdan's Property $(T)$ and Property $(FH)$ to a Banach space $B$, and call these Property $(T_B)$ and Property $(F_B)$ respectively. 
They prove that a finitely generated group has Property $(T_{L^p([0,1])})$ for $p\in [1,\infty)$ if and only if it has Kazhdan's Property $(T)$, which is Property $(T_{L^2([0,1])})$. 
They also, and Chatterji, Dru\c{t}u and Haglund \cite{bibCDH},  prove that a finitely generated group has Property $(F_{L\sp p([0,1])})$ for $p\in [1,2]$ if and only if it has Property $(FH)$, which is Property $(F_{L^2([0,1])})$. 
On the contrary, Bourdon and Pajot \cite{bibBP} show that an infinite hyperbolic group $\Gamma $, which may have Property $(FH)$, does not have Property $(F_{L\sp p(\Gamma )})$ if $p$ is large enough. 
As this result shows, in general, Property $(FH)$ and Property $(F_B)$ are different. 

In this paper, for a strictly convex Banach space $B$, we investigate Property $(T_B)$ restricting to one linear isometric action without non-zero invariant vectors, via the variation of the displacement function with respect to the orbit of a finite generating subset of a finitely generated group. 
Also we investigate Property $(F_B)$ restricting to the set of the affine isometric actions whose linear part are one linear isometric action on $B$ without non-zero invariant vectors. 

We show the following: 
Let $\Gamma $ be a finitely generated group, $K$ a finite generating set of $\Gamma $,  and $B$ a strictly convex Banach space. 
We define the displacement function 
\begin{eqnarray*}
F_{\alpha,\ \!\! r}(v) :=\left( \sum_{\gamma\in K} \| \alpha(\gamma,v)-v\|^r m(\gamma) \right)^{1/r} \!, \ F_{\alpha,\ \!\! \infty }(v) := \max_{\gamma \in K}\| \alpha(\gamma ,v)-v\|
\end{eqnarray*} 
at $v\in B$ for an affine isometric action $\alpha$ of $\Gamma $ on $B$ and $1\le r< \infty$, where $m$ is a weight on $K$.
The absolute gradient $|\nabla _{\!\!-}F_{\alpha,\ \!\! r}|(v)$ is the maximum descent of $F_{\alpha,\ \!\! r}(v)$ around $v$ (see Definition \ref{def:ag} for details). 
Let $\pi$ be a linear isometric action  of $\Gamma $ on $B$ without non-zero invariant vectors, and $1\le r\le \infty$.

\begin{theorem}\label{thm:TB} 
The following are equivalent. 
\begin{enumerate} 
\item \label{item:thm:TB1}
There is a positive constant $C'$ such that every $v\in B$ satisfies $$\max_{\gamma \in K}\| \pi(\gamma ,v)-v\|\ge C'\|v\|.$$ 
\item \label{item:thm:TB2}
There is a positive constant $C$ such that every $v\in B\backslash\{0\}$ satisfies $$|\nabla _{\!\!-}F_{\pi,\ \!\! r}|(v) \ge C.$$ 
\end{enumerate} 
\end{theorem}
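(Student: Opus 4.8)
The plan is to treat $F:=F_{\pi,r}$ as a generalized norm and to exploit its positive homogeneity throughout, reserving Ekeland's variational principle for the delicate implication. I would first record the structural facts I need. Because $\pi$ is linear, $\pi(\gamma,v)-v=(\pi(\gamma)-\mathrm{id})v$ is linear in $v$, so $F$ is positively homogeneous, $F(tv)=tF(v)$ for $t\ge 0$; it is continuous and nonnegative; and $F(v)=0$ forces $\pi(\gamma)v=v$ for all $\gamma\in K$, hence $\pi(g)v=v$ for every $g\in\Gamma$ since $K$ generates $\Gamma$, so $v=0$ because $\pi$ has no nontrivial fixed vector. As $K$ is finite and $m$ is a positive weight, the functions $F_{\pi,r}$ for $1\le r\le\infty$ are pairwise equivalent; in particular condition (\ref{item:thm:TB1}) is equivalent to the existence of $c>0$ with $F(v)\ge c\|v\|$ for all $v$, and I would use this reformulation freely.

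For (\ref{item:thm:TB1})$\Rightarrow$(\ref{item:thm:TB2}) I would prove the elementary pointwise bound $|\nabla_{\!\!-}F|(v)\ge F(v)/\|v\|$ for $v\ne 0$ by testing the slope along the ray toward the origin: for $w=(1-t/\|v\|)v$ with small $t>0$ one has $\|w-v\|=t$ and, by homogeneity, $F(v)-F(w)=(t/\|v\|)F(v)$, so the difference quotient equals $F(v)/\|v\|$ for every such $t$, whence the defining $\limsup$ is at least this value. Combining with $F(v)\ge c\|v\|$ gives $|\nabla_{\!\!-}F|(v)\ge c$, which is (\ref{item:thm:TB2}).

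The substance is (\ref{item:thm:TB2})$\Rightarrow$(\ref{item:thm:TB1}). The guiding picture is a steepest descent: if $F$ may always be decreased at rate at least $C$, then a descent curve from $v$ to the unique zero $0$ has length at most $F(v)/C$, while its length is at least $\|v\|$, forcing $F(v)\ge C\|v\|$. I would make this rigorous without constructing any curve, via Ekeland's variational principle. Assume (\ref{item:thm:TB1}) fails; by the equivalence above $\inf_{\|v\|=1}F(v)=0$, so for arbitrarily small $\delta>0$ there is $v_0$ with $\|v_0\|=1$ and $F(v_0)=\delta$. Since $F\ge 0$ is continuous with infimum $0$, Ekeland's principle applied with $\epsilon=\delta$ and $\lambda=1/2$ produces $y$ with $\|y-v_0\|\le 1/2$, hence $\|y\|\ge 1/2$ so that $y\ne 0$, and with $|\nabla_{\!\!-}F|(y)\le\delta/(1/2)=2\delta$; choosing $\delta<C/2$ contradicts (\ref{item:thm:TB2}).

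The one point demanding care, and the main obstacle, is that the near-minimizer must be a legitimate test point for (\ref{item:thm:TB2}), that is, it must stay away from the origin where the slope bound is imposed; this is exactly what the parameter $\lambda$ of Ekeland's principle controls, while homogeneity is what lets me normalize $v_0$ to the unit sphere at the outset. Convexity of $F$, available since $r\ge 1$ and each $v\mapsto\|(\pi(\gamma)-\mathrm{id})v\|$ is a seminorm, would give the alternative description $|\nabla_{\!\!-}F|(v)=\operatorname{dist}(0,\partial F(v))$ in the dual norm, but the Ekeland route above requires neither convexity nor strict convexity of $B$.
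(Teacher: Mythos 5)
Your proposal is correct, and its key step rests on a different device than the paper's. The easy implication (i) $\Rightarrow$ (ii) is identical in both: slide toward the origin along the ray through $v$ and use positive homogeneity, so the difference quotient is identically $F_{\pi,r}(v)/\|v\|$. For the hard implication (ii) $\Rightarrow$ (i), the paper never cites Ekeland's variational principle; instead it gives a self-contained iterative construction that is, in effect, a proof of exactly the Ekeland-type statement you invoke. Assuming (ii) holds and (i) fails, the paper takes a unit vector $w_0$ with $F_{\pi,r}(w_0)$ small, introduces the descent sets $P(w)=\{u\neq w : (F_{\pi,r}(w)-F_{\pi,r}(u))/\|w-u\|\ge (1-\epsilon)C\}$, chooses $w_i\in P(w_{i-1})$ nearly minimizing $F_{\pi,r}$ there, shows $P(w_i)\subset P(w_{i-1})$ with diameters tending to zero, and by completeness extracts a limit $w_\infty$ with $\|w_\infty\|\ge 1-\epsilon$ at which no descent at rate $(1-\epsilon)C$ is possible, contradicting (ii). Your application of Ekeland (with $\epsilon=\delta$, $\lambda=1/2$, and the normalization $\|v_0\|=1$ guaranteeing the output point $y$ stays at distance at least $1/2$ from the origin, which is precisely the "one point demanding care" you flag) compresses that entire construction into a citation, and it needs strictly less machinery: you never use Proposition \ref{prop:agsup} (hence no convexity of $F_{\pi,r}$), because Ekeland's conclusion bounds the global sup-slope, which trivially dominates the $\limsup$ in Definition \ref{def:ag}, whereas the paper needs Proposition \ref{prop:agsup} to upgrade the $\limsup$ hypothesis to a sup-slope bound before its iteration can start. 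What the paper's route buys is self-containedness --- it implicitly reproves the variational principle rather than quoting it --- and, like yours, it nowhere uses strict convexity of $B$, so both arguments are valid in an arbitrary Banach space. Two cosmetic slips on your side: you should take $F(v_0)\le\delta$ rather than $F(v_0)=\delta$ (the infimum need not be attained at any prescribed level), and the closing remark about $\operatorname{dist}(0,\partial F(v))$ plays no role in the argument.
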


Denote by $\mathcal{A}(\pi)$ the set of the affine isometric actions whose linear part are $\pi$. 

\begin{theorem}\label{thm:FB}
The following are equivalent. 
\begin{enumerate} 
\item \label{item:thm:FB1}
Every $\alpha \in \mathcal{A}(\pi)$ has a fixed point.  
\item \label{item:thm:FB2} 
For every $\alpha \in \mathcal{A}(\pi)$, there is a positive constant $C$ such that 
every $v\in B$ with $F_{\alpha,\ \!\! r}(v) > 0$ satisfies $$|\nabla _{\!\!-}F_{\alpha,\ \!\! r}|(v) \ge C.$$ 
\end{enumerate} 
Furthermore, in {\rm (\ref{item:thm:FB2})}, $C$ can be a constant independent of each $\alpha $. 
\end{theorem}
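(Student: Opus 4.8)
The plan is to recast Theorem \ref{thm:FB} in cohomological terms and then derive the quantitative slope bound from the Open Mapping Theorem together with Theorem \ref{thm:TB}. I identify $\mathcal{A}(\pi)$ with the cocycle space $Z^1(\Gamma,\pi)=\{b\colon\Gamma\to B:\,b(\gamma\delta)=b(\gamma)+\pi(\gamma)b(\delta)\}$ via $\alpha_b(\gamma,v)=\pi(\gamma)v+b(\gamma)$, and write $\partial v$ for the coboundary $\gamma\mapsto\pi(\gamma)v-v$. Then the displacement at $v$ is the tuple $(\partial v+b)|_K$, so $F_{\alpha_b,r}(v)=\|\partial v+b\|_r$ is a convex function of $v$; moreover $\alpha_b$ has a fixed point $p$ if and only if $b=\partial(-p)$ is a coboundary, and in that case $\alpha_b(\gamma,v)-v=\pi(\gamma)(v-p)-(v-p)$, whence $F_{\alpha_b,r}(v)=F_{\pi,r}(v-p)$ and the two absolute gradients agree under the translation $v\mapsto v-p$.

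For (\ref{item:thm:FB2})$\Rightarrow$(\ref{item:thm:FB1}) I would treat each $\alpha$ separately by Ekeland's variational principle. The function $F_{\alpha,r}$ is continuous and bounded below by $0$ on the complete space $B$, so for every small $\epsilon>0$ Ekeland's principle yields a point $v_\epsilon$ with $|\nabla_{\!\!-}F_{\alpha,r}|(v_\epsilon)\le\epsilon$. Taking $\epsilon<C$ and applying the contrapositive of (\ref{item:thm:FB2}) forces $F_{\alpha,r}(v_\epsilon)=0$, so $v_\epsilon$ is a fixed point of $\alpha$.

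The heart of the argument is (\ref{item:thm:FB1})$\Rightarrow$(\ref{item:thm:FB2}). By the fixed-point characterization above, (\ref{item:thm:FB1}) says exactly that $\partial\colon B\to Z^1(\Gamma,\pi)$ is surjective; it is injective because $\pi$ has no nonzero fixed vector. I would equip $Z^1(\Gamma,\pi)$ with the norm $\|b\|=\max_{\gamma\in K}\|b(\gamma)\|$, under which it is a Banach space, and then invoke the Open Mapping Theorem: the continuous bijection $\partial$ has a bounded inverse, so $\max_{\gamma\in K}\|\pi(\gamma)v-v\|=\|\partial v\|\ge C'\|v\|$ for some $C'>0$. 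This is precisely condition (\ref{item:thm:TB1}) of Theorem \ref{thm:TB}, which therefore supplies a constant $C>0$, depending only on $C'$ and hence only on $\pi$, with $|\nabla_{\!\!-}F_{\pi,r}|(w)\ge C$ for all $w\ne0$. Since (\ref{item:thm:FB1}) also gives each $\alpha$ a fixed point $p$, the identity $F_{\alpha,r}(v)=F_{\pi,r}(v-p)$ transports this to $|\nabla_{\!\!-}F_{\alpha,r}|(v)\ge C$ on $\{F_{\alpha,r}>0\}=B\setminus\{p\}$, with one and the same $C$ for every $\alpha$; this is also the asserted uniformity.

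The step I expect to be the main obstacle is the application of the Open Mapping Theorem, and specifically the verification that $Z^1(\Gamma,\pi)$ with the generator norm is complete: a Cauchy sequence converges on $K$, and one must check that the cocycle identity propagates the limit consistently and word-independently to all of $\Gamma$. This completeness is what upgrades the qualitative statement that every cocycle is a coboundary into the quantitative lower bound $\|\partial v\|\ge C'\|v\|$ demanded by Theorem \ref{thm:TB}; the convexity reformulation, the Ekeland step, and the translation reduction are then routine.
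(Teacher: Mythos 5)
Your proposal is correct and follows essentially the same route as the paper: for (ii)$\Rightarrow$(i) the paper likewise invokes the existence of points of arbitrarily small absolute gradient (Lemma 5.4 of \cite{bibT}, an Ekeland-type statement for continuous convex functions bounded below), and for (i)$\Rightarrow$(ii) it argues exactly as you do --- vanishing of $H^1(\Gamma,\pi)$, the open mapping theorem applied to the injective bounded map $d\colon B\to Z^1(\pi)$, translation by a fixed point to reduce $\alpha$ to $\pi$, and a radial scaling estimate giving the uniform slope bound. The only cosmetic difference is that you obtain the final bound by citing Theorem \ref{thm:TB} with the $\|\cdot\|_\infty$ norm on $Z^1(\pi)$, whereas the paper uses $\|\cdot\|_r$ and carries out the scaling computation inline (which is precisely the easy direction of Theorem \ref{thm:TB}).
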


We apply these theorems to the left regular representation $\lambda_{\Gamma ,\ \!\! p}$ of $\Gamma $ on $\ell \sp p (\Gamma )$ ($1<p<\infty$). 
Let $\Delta_p$ be the $p$-Laplace operator on $D_p(\Gamma) $ which is the Dirichlet finite function space (see Section \ref{sec:Lp} for details). 
Then we have

\begin{corollary}\label{cor:1}
The following are equivalent. 
\begin{enumerate} 
\item 
There is a positive constant $C'$ such that every $f\in \ell\sp p(\Gamma )$ satisfies $$\max_{\gamma \in K}\| \lambda_{\Gamma ,\ \!\! p}(\gamma)f-f\|_{\ell \sp p(\Gamma )}\ge C'\|f\|_{\ell \sp p(\Gamma )}.$$ 
\item 
There is a positive constant $C$ such that every $f\in \ell\sp p(\Gamma )$ satisfies  $$\|\Delta_pf\|_{\ell \sp q(\Gamma )} \ge C\| f\|_{D_p(\Gamma)}^{p-1},$$ 
where $q$ is a conjugate exponent of $p$.  
\end{enumerate} 
\end{corollary}

If $p=2$, these conditions are equivalent to a lower estimation of the spectrum of $\Delta_2$ on $\ell\sp p(\Gamma )$.

\begin{corollary}\label{cor:2}
The following are equivalent. 
\begin{enumerate}
\item Every $\alpha \in \mathcal{A}(\lambda_{\Gamma ,\ \!\! p})$ has a fixed point. 
\item There is a positive constant $C$ such that every $f\in D_p(\Gamma)$ satisfies  $$\left\| \Delta _pf \right\|_{\ell \sp q(\Gamma )} \ge C \| f\|_{D_p(\Gamma )}^{p-1} ,$$ 
where $q$ is the conjugate exponent of $p$.  
\end{enumerate}
\end{corollary}

\noindent
{\bf Acknowledgements.}
The author was supported by the Grant-in-Aid for the GCOE Program 'Weaving Science Web beyond Particle-Matter Hierarchy', Tohoku University, and GCOE 'Fostering top leaders in mathematics', Kyoto University. 
The author would like to express his gratitude to Professor Hiroyasu Izeki for helpful comments. 
The author also would like to thank the referees of this paper for their extremely valuable suggestions.


\section{Strictly convex Banach spaces}\label{sec:scBsp}

In this section, we review the definitions and several properties of strictly convex Banach  spaces, smooth Banach spaces, uniformly convex Banach spaces and uniformly smooth Banach spaces. 
Basic references are \cite{bibBL}, \cite{bibLT1} and \cite{bibLT2}. 
We denote by $(B^*,\| \ \|_{B^*})$ the dual Banach space of a Banach space $(B,\| \ \|)$. 

\begin{definition}\label{def:scBsp}
A Banach space $(B,\|\ \|)$ is said to be {\it strictly convex} if $\| v+u\|<2$ for all $v,u\in B$ with $v\not=u$, $\|v\|\le 1$ and $\| u\|\le 1$.  
\end{definition}
 
\begin{definition}\label{def:ucBsp}
A Banach space $(B,\| \ \|)$ is said to be {\it uniformly convex} if the {\it modulus of convexity} of $B$  
\begin{eqnarray*}
\delta _B (\epsilon ) := 
\inf \left\{ 1-\frac{\| u+v \|}{2} : 
\| u \| \le 1, \| v \| \le 1 \ {\rm and }\  \| u-v \| \ge \epsilon 
\right\} 
\end{eqnarray*}
is positive for all $\epsilon>0$. 
\end{definition}
A uniformly convex Banach space is strictly convex obviously. 
For instance, $L\sp p$ spaces ($1<p<\infty $) are uniformly convex Banach spaces. 

A {\it support functional} at $v\in B$ is a functional $f\in B^*$ such that $\| f\|_{B^*}=1$ and $f(v)=\| v\|$.
 
\begin{definition}\label{def:sBsp}
A Banach space is said to be {\it smooth} if every non-zero vector has a unique support functional.   
\end{definition}
 
We denote by $j(v)$ the support functional at a non-zero vector $v$ in a smooth Banach space $B$, and call $j$ the {\it duality map}.  
For the trivial vector $0$ of $B$, we set $j(0)$ to be the zero functional on $B$. 
If $B$ is a real smooth Banach space, then  
\begin{eqnarray*} 
j(v)u = \lim _{t\to 0} \frac{\| v+tu\|-\| v\|}{t}
\end{eqnarray*} 
for all $v\in B\backslash \{ 0\}$ and $u\in B$. 

\begin{definition}\label{usBsp}
A Banach space $(B,\|\ \|)$ is said to be {\it uniformly smooth}  if the {\it modulus of smoothness} of $B$ 
\begin{eqnarray*}
\rho _B (\tau ) := 
\sup \left\{ \frac{\| u+v \|}{2} + \frac{\| u-v \|}{2} -1 : 
\| u \| \le 1 \ {\rm and }\  \| v \| \le \tau \right\}
\end{eqnarray*} 
satisfies that $\rho_B (\tau )/\tau \to 0$ when $\tau\searrow 0$. 
\end{definition}

A real uniformly smooth Banach space $B$ is smooth.  
Furthermore, the duality map $j$ from the unit sphere of $B$ into the unit sphere of $B^*$ is a uniformly continuous map with a uniformly continuous inverse. 
For a complex number $c\in \mathbb{C}$, let $\Re c$ denote the real part of $c$. 
Note that for any $w^*\in B^*$ we have $\|w^*\|_{B^*}=\max\{ |\Re(w^*(v))| : v\in B, \|v\|=1\} $. 
This is because for any $w^*\in B^*$  and any $v\in B$ there is $t\in \mathbb{C}$ such that $\|t\|=1$ and 
$w^*(tv)\in \mathbb{R}$. 
The following proposition for the case that $B$ is real is Proposition A.5. in \cite{bibBL}.

\begin{proposition}\label{prop:usconti} 
Let $B$ be a uniformly smooth Banach space. 
Then 
\begin{eqnarray*}
\|j(v) -j(u)\|_{B^*} 
\le 2\rho_B
\left(2\left\| \frac{v}{\| v\|} -\frac{u}{\| u\|}\right\| \right) 
\bigg/ 
\left\| \frac{v}{\| v\|} -\frac{u}{\| u\|}\right\|
\end{eqnarray*} 
for all $v,u\in B\backslash \{0\}$ with $v\not = u$. 
\end{proposition}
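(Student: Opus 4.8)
If $B$ is a real Banach space the statement is Proposition A.5 in \cite{bibBL} (with $\Re j=j$), so the only task is to treat complex $B$; the plan is to reduce this to the real case by passing to the realification of $B$. Write $B_{\mathbb{R}}$ for $B$ regarded as a Banach space over $\mathbb{R}$, with the same norm. Since the modulus of smoothness is defined purely through the quantities $\|u+v\|$, $\|u-v\|$, $\|u\|$, $\|v\|$ and the constraints $\|u\|\le 1$, $\|v\|\le\tau$, it is insensitive to whether scalars are real or complex; thus $\rho_{B_{\mathbb{R}}}=\rho_B$, and in particular $B_{\mathbb{R}}$ is again uniformly smooth, hence smooth. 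I would record this identity first, as it is what lets the real modulus in Proposition A.5 be replaced by $\rho_B$ at the end.

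Next I would set up the standard isometric correspondence between the complex and real duals. The map $f\mapsto\Re f$ carries $B^*$ onto $(B_{\mathbb{R}})^*$ and is a real-linear isometric bijection: a real functional $g$ has the unique complex-linear preimage $v\mapsto g(v)-i\,g(iv)$, and $\|\Re f\|_{(B_{\mathbb{R}})^*}=\|f\|_{B^*}$, the nontrivial inequality following by rotating by a unimodular phase so that $f$ is real on a near-norming vector. Under this correspondence, if $j(v)$ is the complex support functional at $v\neq 0$, then $\Re j(v)$ has real-dual norm $1$ and satisfies $(\Re j(v))(v)=\Re\|v\|=\|v\|$, so it is a support functional for $v$ in $B_{\mathbb{R}}$. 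Because $B_{\mathbb{R}}$ is smooth this support functional is unique, whence $\Re j(v)=j_{\mathbb{R}}(v)$, the duality map of $B_{\mathbb{R}}$.

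With these identifications in hand, I would apply Proposition A.5 in \cite{bibBL} to $B_{\mathbb{R}}$ and its duality map $j_{\mathbb{R}}$, obtaining for $v\neq u$ in $B\backslash\{0\}$
\[
\|j_{\mathbb{R}}(v)-j_{\mathbb{R}}(u)\|_{(B_{\mathbb{R}})^*}
\le 2\rho_{B_{\mathbb{R}}}\!\left(2\left\|\tfrac{v}{\|v\|}-\tfrac{u}{\|u\|}\right\|\right)\!\Big/\left\|\tfrac{v}{\|v\|}-\tfrac{u}{\|u\|}\right\|.
\]
Substituting $j_{\mathbb{R}}=\Re j$ and $\rho_{B_{\mathbb{R}}}=\rho_B$, and noting that the dual norm $\|\Re j(v)-\Re j(u)\|_{B^*}$ in the statement is exactly $\|j_{\mathbb{R}}(v)-j_{\mathbb{R}}(u)\|_{(B_{\mathbb{R}})^*}$, yields the asserted inequality.

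The genuine analytic content is carried entirely by Proposition A.5; the remaining steps are bookkeeping. I expect the main (minor) obstacle to be verifying cleanly that realification preserves uniform smoothness with the \emph{identical} modulus and that $\Re j$ coincides with the real duality map — in particular the uniqueness step, which silently uses that a uniformly smooth space is smooth, together with the phase-rotation computation showing $f\mapsto\Re f$ is norm-preserving.
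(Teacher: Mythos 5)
Your proof is correct, but it takes a genuinely different route from the paper. The paper does not reduce to the real case at all: it reproves the estimate directly for (possibly complex) $B$, first establishing the subgradient-type inequality $\Re (j(u)v) \le \|u+v\| - \|u\|$, then, for unit vectors $x,y$ and an arbitrary $z$ with $\|z\| = \|x-y\|$, chaining such inequalities to bound $(\Re j(y)-\Re j(x))z$ by $\|x+(y-x+z)\| + \|x-(y-x+z)\| - 2 \le 2\rho_B(2\|y-x\|)$, and finally taking the supremum over $z$; in effect it inlines the proof of Proposition A.5 of \cite{bibBL} with real parts inserted where needed, rather than citing it. Your argument instead treats Proposition A.5 as a black box and transfers it to the complex case through the realification $B_{\mathbb{R}}$, which requires the three bookkeeping facts you identify: $\rho_{B_{\mathbb{R}}}=\rho_B$ (immediate, since the modulus involves only norms), the real-linear isometry $f\mapsto \Re f$ from $B^*$ onto $(B_{\mathbb{R}})^*$ (the phase-rotation argument), and the identification $\Re j(v)=j_{\mathbb{R}}(v)$ via smoothness of $B_{\mathbb{R}}$ (which follows from $\rho_{B_{\mathbb{R}}}=\rho_B$ and the fact, stated in the paper, that real uniformly smooth spaces are smooth). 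All of these are verified or correctly sketched in your write-up, so the reduction is sound. What each approach buys: yours is shorter and makes transparent that the complex statement is a formal consequence of the real one, at the cost of invoking the cited result and the realification machinery; the paper's computation is self-contained and yields the inequality without any appeal to \cite{bibBL} beyond attribution.
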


\begin{proof}
For $u\in B\backslash \{0\}$ and $v\in B$, we have 
\begin{eqnarray*}
\Re (j(u)v) + \| u\| =  \Re (j(u)(v + u)) \le |j(u)(v + u)| \le \| v+u\| .
\end{eqnarray*}
Hence $\Re (j(u)v) \le \| u+v\|- \| u\| $. 

Fix $x,y\in B\backslash\{0\}$ with $x\not=y$. 
Since any $u\in B\backslash \{0\}$ satisfies $j(u)=j(u/\|u\|)$, we may assume that $\| x\| =\| y\| =1$. 
Take an arbitrary $z\in B$ with $\| z\| = \| x - y\|$. 
Then 
\begin{eqnarray*}
\Re ((j(y)- j(x))z) 
&=& \Re (j(y)z) - \Re (j(x)z) \\
&\le & \| y+z\| -\| y\| - \Re (j(x)z) + \| x\| -\Re (j(x)y) \\ 
&=& \| y+z\| - 1 + \Re (j(x)(x-y-z)) \\ 
&\le & \| y+z\| - 1 + \| x + (x-y-z)\| - \| x\| \\ 
&=& \| x + (y - x +z)\| + \| x - (y -x + z)\| -2 \\ 
&\le & 2 \rho_B (\| y-x+z\|) \\ 
&\le & 2 \rho_B (2\| y-x\|) , 
\end{eqnarray*}
because $\rho_B$ is nondecreasing and $\| y-x+z\|\le 2\| y-x\|$. 
Since $z$ is arbitrary, the proposition follows. 
\end{proof}


\section{Affine isometric actions on a strictly convex Banach space}\label{sec:IA} 

In this section, we summarize some definitions and results which relate to an isometric action $\alpha $ of a finitely generated group on a strictly convex Banach space. 
We will introduce a nonnegative continuous function $F_{\alpha ,\ \!\! r}$ on the Banach space which plays the most important role in this paper, and investigate its behavior using its absolute gradient. 

Let $\Gamma $ be a finitely generated group and $K$ a finite generating subset of $\Gamma $. 
We may assume $K$ is symmetric, that is, $K^{-1}=K$. 
We call a positive function $m$ on $K$ satisfying $ \sum_{\gamma \in K} m(\gamma ) = 1$ a {\it weight} on $K$. 
A weight $m$ on a symmetric finite generating subset $K$ is said to be {\it symmetric} if it satisfies $m(\gamma ) =m(\gamma ^{-1})$ for all $\gamma \in K$. 

Let $\pi $ be a linear isometric action of $\Gamma $ on a Banach space $B$. 
A map $c:\Gamma \to B$ is called a {\it $\pi$-cocycle} if it satisfies $c(\gamma \gamma ')=\pi (\gamma)c(\gamma ') + c(\gamma) $ for all $\gamma , \gamma '\in \Gamma $. 
A cocycle is completely determined by its values on $K$.
For an affine isometric action $\alpha $, 
there are a linear isometric action $\pi$ and a $\pi$-cocycle $c$ 
such that $\alpha (\gamma, v) = \pi (\gamma, v) + c(\gamma)$ for each $\gamma \in \Gamma $ and $v\in B$. 
We call $\pi$ the {\it linear part} of $\alpha $ and $c$ the {\it cocycle part} of $\alpha $, and we write $\alpha =\pi +c$. 
We denote by $\mathcal{A}(\pi)$ the set of the affine isometric actions whose linear part is $\pi$. 

We denote by $Z^1(\pi )$ the linear space consisting of all $\pi$-cocycles.
We define a linear map $d:B\to Z^1(\pi)$ by $dv(\gamma) := \pi (\gamma)v-v$ for each $v\in B$ and $\gamma \in \Gamma$. 
Here, for $v\in B$, we have $dv(\gamma \gamma ') = \pi(\gamma )dv(\gamma ') + dv(\gamma ) $
for all $\gamma ,\gamma '\in \Gamma $, hence $d$ is well-defined. 
We set $B^1(\pi):=d(B)$, and we call an element in $B^1(\pi)$ a {\it $\pi $-coboundary}. 
It is a linear subspace of $Z^1(\pi)$.  
If $\pi $ has no non-zero invariant vector, then $d$ is an isomorphism from $B$ onto $B^1(\pi )$. 

The space $Z^1(\pi)$ describes $\mathcal{A}(\pi)$. 
Each $\pi $-coboundary corresponds to such an affine isometric action having a fixed point. 
{\it The first cohomology of $\Gamma$ with $\pi $-coefficient} is $H^1(\Gamma,\pi ) := Z^1(\pi )/B^1(\pi )$. 
Note that $H^1(\Gamma,\pi )$ vanishes if and only if every affine isometric action $\alpha $ of $\Gamma$ on $B$ with the linear part $\pi $ has a fixed point. 

We endow $Z^1(\pi)$ with the norm 
\begin{eqnarray*}
\| c \|_r 
:= \left( \sum_{\gamma\in K} \| c(\gamma)\|^r m(\gamma) \right)^{1/r} 
\end{eqnarray*} 
for $1\le r<\infty $, or the norm 
\begin{eqnarray*}
\| c\|_{\infty } := \max_{\gamma\in K}\| c(\gamma)\|  . 
\end{eqnarray*}
Then $Z^1(\pi )$ becomes a Banach space with respect to each of these norms. 
Note that, in general, $B^1(\pi )$ is not closed in $Z^1(\pi )$. 
Since $\| dv\|_r  \le 2\| v\|$ for all $v\in B$ and $1\le r\le \infty $, 
$d$ is bounded with respect to each of these norms. 

\begin{definition}\label{def:F} 
For an affine isometric action $\alpha=\pi +c$ and $1\le  r\le \infty $, 
we define $F_{\alpha,\ \!\! r}:B\to [0,\infty)$ by 
$F_{\alpha,\ \!\! r} (v):= \|dv + c\|_r = \| \alpha (\cdot, v )-v\|_r$
for each  $v\in B$ and $1\le r\le \infty $.  
\end{definition}

The function $F_{\alpha,\ \!\! r}$ vanishes at $v_0\in B$ if and only if $v_0$ is a fixed point of $\alpha$.  
Using Minkowski's inequality, we obtain $|F_{\alpha ,\ \!\! r} (u) -F_{\alpha ,\ \!\! r} (v)| \le 2\| u-v\|$ for all $u,v\in B$, and hence $F_{\alpha ,\ \!\! r}$ is uniformly continuous for each $1\le r\le\infty $. 

A function $F$ on a strictly convex Banach space $B$ is said to be {\it convex} if, for any segment $c:[0,l]\to B$, $F(c(tl))\le (1-t)F (c(0)) + tF (c(l))$ for $t\in [0,1]$. 
For an affine isometric action $\alpha $ on a strictly convex Banach space, 
$F_{\alpha,\ \!\! r} $ is convex for each $1\le r\le\infty $ by an easy computation. 

\begin{definition}\label{def:ag}
We define the \textit{absolute gradient} $|\nabla _{\!\!-}F_{\alpha ,\ \!\! r}|$ of $F_{\alpha ,\ \!\! r}$ at $v\in B$ by 
\begin{eqnarray*} 
|\nabla _{\!\!-} F_{\alpha ,\ \!\! r}|(v) 
:= \max \left\{\limsup_{u\to v,\ \!\! u\in B} 
\frac{F_{\alpha ,\ \!\! r}(v)-F_{\alpha ,\ \!\! r}(u)}{\|v-u\|} ,\ \!  0\right\}. 
\end{eqnarray*} 
\end{definition} 

We can regard the function $|\nabla _{\!\!-}F_{\alpha,r}|$ as the size of the gradient in the direction which decreases $F_{\alpha, r}$ most.  
Note that $|\nabla _{\!\!-}F_{\alpha,\ \!\!  r}|(v) \le 2$ for any $v\in B$. 
The absolute gradient $|\nabla _{\!\!-}F_{\alpha,r}|$ has the following properties: 
The following Proposition \ref{prop:agsup}, Corollary \ref{cor:agmini} and Proposition \ref{prop:lowsemiconti} were proved by Mayer \cite{bibM} for a Hadamard space. 
His proofs are valid for Banach spaces.  

\begin{proposition}[\cite{bibM}, Proposition 2.34]\label{prop:agsup}
\begin{eqnarray*}
|\nabla _{\!\!-}F_{\alpha ,\ \!\! r}|(v) 
= \max \left\{ 
\sup_{u\not= v,\ \!\!  u\in B} \frac{F_{\alpha ,\ \!\! r}(v)-F_{\alpha ,\ \!\! r}(u)}{\|v-u\|} 
,\ \! 0\right\} 
\end{eqnarray*} 
at all $v\in B$.  
\end{proposition}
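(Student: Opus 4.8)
The plan is to prove equality between the two expressions for $|\nabla_{\!\!-}F_{\alpha,\ \!\! r}|(v)$ by establishing inequalities in both directions. The $\ge$ direction requires that the $\sup$ over all $u\not=v$ dominates the $\limsup$ as $u\to v$, and this is immediate: the supremum over a larger index set is at least the supremum over any neighborhood of $v$, so $\sup_{u\not=v}\frac{F(v)-F(u)}{\|v-u\|}\ge \limsup_{u\to v}\frac{F(v)-F(u)}{\|v-u\|}$, and taking $\max$ with $0$ on both sides preserves this. Here I abbreviate $F=F_{\alpha,\ \!\! r}$.

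The substantive direction is $\le$: I must show that for every $u\not=v$, the difference quotient $\frac{F(v)-F(u)}{\|v-u\|}$ is bounded above by $|\nabla_{\!\!-}F|(v)$. The key tool is the convexity of $F$ established just before Definition \ref{def:ag}. Given $u\not=v$, consider the segment $c(t)=(1-t)v+tu$ for $t\in[0,1]$, so $c(0)=v$ and $c(1)=u$. Convexity gives $F(c(t))\le (1-t)F(v)+tF(u)$, hence
\begin{eqnarray*}
\frac{F(v)-F(c(t))}{\|v-c(t)\|}=\frac{F(v)-F(c(t))}{t\|v-u\|}\ge \frac{F(v)-(1-t)F(v)-tF(u)}{t\|v-u\|}=\frac{F(v)-F(u)}{\|v-u\|}
\end{eqnarray*}
for every $t\in(0,1]$. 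Since $c(t)\to v$ as $t\searrow 0$, taking $\limsup$ over this particular approach shows $\limsup_{w\to v}\frac{F(v)-F(w)}{\|v-w\|}\ge \frac{F(v)-F(u)}{\|v-u\|}$. Therefore $|\nabla_{\!\!-}F|(v)\ge \frac{F(v)-F(u)}{\|v-u\|}$ for all $u\not=v$, and also trivially $|\nabla_{\!\!-}F|(v)\ge 0$, so $|\nabla_{\!\!-}F|(v)$ dominates the right-hand $\max$. Combining the two directions yields the claimed equality.

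The main obstacle is essentially bookkeeping: one must be careful that the $\limsup$ in Definition \ref{def:ag} ranges over \emph{all} $u\to v$ in $B$, so restricting to the segment direction only gives a lower bound for that $\limsup$ — which is exactly what is needed, since we want to show the full $\limsup$ is \emph{large}. No uniform convexity or smoothness is used; only convexity of $F$ and the triangle-inequality structure of the norm. The argument is a verbatim transcription of Mayer's geodesic-space proof, with geodesics replaced by affine segments, so the only thing to check is that convexity along affine segments — rather than along geodesics — suffices, which it does.
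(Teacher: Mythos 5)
Your proof is correct, and it is essentially the argument the paper relies on: the paper simply cites Mayer's Proposition 2.34 and notes his proof carries over to Banach spaces, and your write-up is exactly that convexity-along-segments argument (monotonicity of difference quotients along the affine segment from $v$ to $u$), using only the convexity of $F_{\alpha,\ \!\! r}$ established before Definition \ref{def:ag}. Nothing further is needed.
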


\begin{corollary}[\cite{bibM}, Corollary 2.35]\label{cor:agmini} 
A point $v_0\in B$ minimizes $F_{\alpha ,\ \!\! r}$ if and only if $|\nabla_{\!\!-}F_{\alpha ,\ \!\! r}|$ vanishes at $v_0$.
\end{corollary}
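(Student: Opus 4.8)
The plan is to obtain the corollary as an immediate consequence of Proposition \ref{prop:agsup}, which already rewrites $|\nabla_{\!\!-}F_{\alpha,r}|(v)$ as a supremum of difference quotients taken over \emph{all} of $B$ rather than as a mere $\limsup$ as $u\to v$. Once that global formula is available, both implications are essentially formal, so no serious obstacle remains; the content of the statement really sits inside Proposition \ref{prop:agsup}.

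First I would treat the easy direction: suppose $v_0$ minimizes $F_{\alpha,r}$. Then $F_{\alpha,r}(v_0)-F_{\alpha,r}(u)\le 0$ for every $u\in B$, so every difference quotient $\bigl(F_{\alpha,r}(v_0)-F_{\alpha,r}(u)\bigr)/\|v_0-u\|$ with $u\ne v_0$ is nonpositive (the denominator being strictly positive). Hence the supremum appearing in Proposition \ref{prop:agsup} is $\le 0$, and taking the maximum with $0$ gives $|\nabla_{\!\!-}F_{\alpha,r}|(v_0)=0$.

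Conversely, assume $|\nabla_{\!\!-}F_{\alpha,r}|(v_0)=0$. By Proposition \ref{prop:agsup} this forces $\sup_{u\ne v_0}\bigl(F_{\alpha,r}(v_0)-F_{\alpha,r}(u)\bigr)/\|v_0-u\|\le 0$, so for each $u\ne v_0$ we get $F_{\alpha,r}(v_0)-F_{\alpha,r}(u)\le 0$, i.e. $F_{\alpha,r}(v_0)\le F_{\alpha,r}(u)$; since this also holds trivially at $u=v_0$, the point $v_0$ minimizes $F_{\alpha,r}$ on $B$.

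The one subtlety worth flagging is why this is not purely formal from Definition \ref{def:ag} alone: the defining $\limsup_{u\to v}$ only records the local behavior of $F_{\alpha,r}$ around $v_0$, which by itself could not distinguish a global minimum from a local one. It is Proposition \ref{prop:agsup} — whose proof uses the convexity of $F_{\alpha,r}$ noted earlier — that upgrades this local quantity to a genuine global supremum, and it is precisely this upgrade that lets a vanishing absolute gradient certify a global minimum. So the "hard part" has already been done; the corollary itself is just an unwinding of that formula.
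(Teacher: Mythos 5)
Your proof is correct and matches the intended derivation: the paper simply cites Mayer's Corollary 2.35, which (like your argument) is an immediate consequence of the supremum formula in Proposition \ref{prop:agsup}, with both directions following formally as you describe. Your closing remark that the convexity-based upgrade from $\limsup$ to global supremum is where the real content lies is also exactly right.
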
 

\begin{proposition}[\cite{bibM}, Proposition 2.25]\label{prop:lowsemiconti}
The absolute gradient $|\nabla_{\!\!-}F_{\alpha ,\ \!\! r}|$ is lower semicontinuous on $B$. 
\end{proposition}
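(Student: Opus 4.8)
The plan is to deduce the statement from Proposition \ref{prop:agsup}, which replaces the $\limsup$ in Definition \ref{def:ag} by a genuine supremum over all $u\neq v$. Write $G:=|\nabla_{\!\!-}F_{\alpha,\ \!\! r}|$ for brevity, and for $u\neq v$ put $q_u(v):=(F_{\alpha,\ \!\! r}(v)-F_{\alpha,\ \!\! r}(u))/\|v-u\|$, so that Proposition \ref{prop:agsup} reads $G(v)=\max\{\sup_{u\in B,\ u\neq v}q_u(v),\,0\}$.

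The cleanest route I would take is to exhibit $G$ as a pointwise supremum of a family of lower semicontinuous functions. For each fixed $u\in B$ define $g_u:B\to[0,\infty)$ by $g_u(v):=\max\{q_u(v),0\}$ for $v\neq u$ and $g_u(u):=0$. Since $F_{\alpha,\ \!\! r}$ is continuous (indeed uniformly continuous, as noted after Definition \ref{def:F}) and $v\mapsto\|v-u\|$ is continuous and bounded away from $0$ near every point $\neq u$, each $g_u$ is continuous on $B\setminus\{u\}$; at the point $u$ it is lower semicontinuous simply because $g_u\geq 0=g_u(u)$. Hence every $g_u$ is lower semicontinuous on all of $B$. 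I would then check that $G(v)=\sup_{u\in B}g_u(v)$ for every $v$: the terms with $u\neq v$ reproduce the supremum of Proposition \ref{prop:agsup} after taking the outer maximum with $0$, while the single term $u=v$ only contributes $g_v(v)=0$, which is harmless. Since a pointwise supremum of an arbitrary family of lower semicontinuous functions is lower semicontinuous, $G$ is lower semicontinuous, as claimed.

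Alternatively, the same conclusion follows by a direct argument at a point $v_0\in B$. If $G(v_0)=0$ there is nothing to prove, as $G\geq 0$ everywhere. If $G(v_0)>0$, then given $\epsilon>0$, Proposition \ref{prop:agsup} provides some $u_0\neq v_0$ with $q_{u_0}(v_0)>G(v_0)-\epsilon/2$; since $v_0\neq u_0$, for all $v$ in a small enough ball about $v_0$ we still have $v\neq u_0$, and continuity of $F_{\alpha,\ \!\! r}$ forces $q_{u_0}(v)>G(v_0)-\epsilon$ on a possibly smaller ball. Hence $G(v)\geq q_{u_0}(v)>G(v_0)-\epsilon$ there, which is exactly lower semicontinuity at $v_0$.

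I do not anticipate a real obstacle. The only points that require a little care are that the supremum in Proposition \ref{prop:agsup} need not be attained, so one must argue with a near-maximizer $u_0$ rather than an actual maximizer, and that the index set $\{u:u\neq v\}$ over which the supremum is taken varies with the base point $v$; the latter is handled by the observation that the inequality $v_0\neq u_0$ persists under small perturbations of $v_0$.
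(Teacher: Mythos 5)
Your proof is correct. Note that the paper itself offers no argument for Proposition \ref{prop:lowsemiconti}: it cites Mayer's Proposition 2.25 (proved for convex functionals on Hadamard spaces) and simply asserts that his proof transfers to Banach spaces; moreover, in Mayer's numbering the lower semicontinuity result precedes the supremum formula quoted here as Proposition \ref{prop:agsup}, so his argument presumably does not run through that formula. Your route goes the other way: you take Proposition \ref{prop:agsup} as given and observe that it exhibits $|\nabla_{\!\!-}F_{\alpha,\ \!\! r}|$ as the pointwise supremum of the functions $g_u$, each continuous off $u$ and lower semicontinuous at $u$ because it is nonnegative and vanishes there, so lower semicontinuity is immediate; your second, $\epsilon$-argument with a near-maximizer $u_0$ is the same idea unwound, and the two points you flag (the supremum need not be attained; the condition $v\neq u_0$ must persist on a small ball around $v_0$) are exactly the ones needing care. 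Incidentally, the bound $|F_{\alpha,\ \!\! r}(v)-F_{\alpha,\ \!\! r}(u)|\le 2\|v-u\|$ noted after Definition \ref{def:F} even makes all the difference quotients uniformly bounded, though you do not need this. What your approach buys is a short deduction that is self-contained within the paper; what it costs is that everything now rests on Proposition \ref{prop:agsup}, which is itself only quoted from Mayer and is where the convexity of $F_{\alpha,\ \!\! r}$ (noted in Section \ref{sec:IA}) genuinely enters. There is no circularity in this, since the supremum formula is established independently of lower semicontinuity, so your proof stands as a valid alternative to the paper's citation.
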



\section{A proof of Theorem \ref{thm:TB}  and Theorem \ref{thm:FB}}
In this section, we will give a proof of Theorem \ref{thm:TB}  and Theorem \ref{thm:FB}.

\begin{proof}[Proof of Theorem \ref{thm:TB}]
Note that (\ref{item:thm:TB1}) is equivalent to the condition that 
there is a positive constant $C'$ such that $F_{\pi ,r}(v)\ge C'\| v\| $ for all $v\in B$.
Note that $F_{\pi , r}$ is convex, and $F_{\pi, r}(av)= aF_{\pi, r}(v)$ for $a>0$ and $v\in B$. 
If we assume (\ref{item:thm:TB1}), we get 
\begin{eqnarray*}
|\nabla _-F_{\pi,r}|(v) 
\ge \lim_{t\to 0,\ t>0} \frac{F_{\pi,r}(v)-F_{\pi,r}(tv)}{\| v-tv\|} 
= \frac{(1-t)F_{\pi,r}(v)}{(1-t)\| v\|} 
\ge C'. 
\end{eqnarray*}
Therefore we have (\ref{item:thm:TB2}). 

On the other hand, we assume (\ref{item:thm:TB2}).  
Hence, by Proposition \ref{prop:agsup},  for all $v\in B\backslash \{0\}$
\begin{eqnarray*}
\sup_{u\in B\backslash \{v\}}
\frac{F_{\pi,r}(v)-F_{\pi,r}(u)}{\| v-u\|} \ge C. 
\end{eqnarray*} 
In particular, $F_{\pi,r}(v)>0$ for all $v\in B\backslash \{0\}$.
Besides we assume that (i) is false, that is, 
for every $\epsilon' >0$ there is a non-zero vector $v\in B\backslash \{0\}$ such that $F_{\pi,r}(v)<\epsilon' \| v\|$.  
Then for $0<\epsilon< 1$ we can take $w_0\in B $ such that $\| w_0 \|=1$ and $F_{\pi,r}(w_0)< (1-\epsilon)\epsilon C $. 
Set for $w\in B$ 
\begin{eqnarray*}
P(w):= \left\{  u\in B\backslash \{w\} : \frac{F_{\pi,r}(w)-F_{\pi,r}(u)}{\| w-u\|} \ge (1-\epsilon )C \right\} .
\end{eqnarray*}
By the assumption, $P(w)$ is not empty for any $w\in B\backslash \{0\}$. 
Since $F_{\pi,r}(0)=0$ and $F_{\pi,r}(u)\ge 0$ for any $u\in B$, $P(w)$ does not contain the origin $0$ of $B$ for any $w\in B\backslash \{0\}$. 
For $u\in P(w_0)$, we have  
\begin{eqnarray*} 
(1-\epsilon )C\| w_0-u\| \le F_{\pi,r}(w_0) < (1-\epsilon)\epsilon C
\end{eqnarray*}
and hence, $ \| w_0-u\| < \epsilon $. 
Therefore $ \| u\|>1-\epsilon $ for all $u\in P(w_0)$.

First, consider the case where $\inf _{v\in P(w_0)}F_{\pi,r}(v)\not=0$. 
Take $w_1\in  P(w_0)$ such that 
$$F_{\pi,r}(w_1) \le (1+\epsilon )\inf_{v\in P(w_0)}F_{\pi,r}(v).$$ 
Since $w_1\in P(w_0)$, for any $v\in P(w_1)$, 
we have 
\begin{eqnarray*}
\frac{F_{\pi,r}(w_0)-F_{\pi,r}(v)}{\|w_0-v\|} 
&\ge & \frac{(F_{\pi,r}(w_0)-F_{\pi,r}(w_1)) + (F_{\pi,r}(w_1)-F_{\pi,r}(v))}
{\|w_0-w_1\| + \|w_1-v\|} \\
&\ge & \frac{(1-\epsilon )C\|w_0-w_1\| 
+ (1-\epsilon )C\|w_1-v\|}
{\|w_0-w_1\| + \|w_1-v\|} \\
&=& (1-\epsilon )C. 
\end{eqnarray*} 
Hence $v \in P(w_0)$ holds, 
that is, $P(w_1) \subset P(w_0)$.  
Thus $\inf_{v\in P(w_1)}F_{\pi,r}(v)\not= 0$.  
Inductively, for each $i\in \mathbb{N}$, 
we can take $w_i \in P(w_{i-1}) $ such that 
$F_{\pi,r}(w_i) \le (1+\epsilon ^i)\inf_{v\in P(w_{i-1})}F_{\pi,r}(v)$.  
Then we have $P(w_i)\subset P(w_{i-1})$ for each $i\in \mathbb{N}$   
and $\inf_{v\in P(w_i)}F_{\pi,r}(v)\not= 0$.  
Thus for $u\in P(w_i)$ we have 
\begin{eqnarray*}
\|w_i-u\| 
&\le & \frac{F_{\pi,r}(w_i) - F_{\pi,r}(u)}{(1-\epsilon )C} \\
&\le & \frac{(1+\epsilon ^i)\inf_{v\in P(w_{i-1})}F_{\pi,r}(v) 
- \inf_{v\in P(w_i)}F_{\pi,r}(v)}{(1-\epsilon )C} \\
&\le & \frac{(1+\epsilon ^i)\inf_{v\in P(w_{i-1})}F_{\pi,r}(v) 
- \inf_{v\in P(w_{i-1})}F_{\pi,r}(v)}{(1-\epsilon )C} \\
&=& \frac{\epsilon ^i\inf_{v\in P(w_{i-1})}F_{\pi,r}(v)}
{(1-\epsilon )C} .  
\end{eqnarray*}
Since $w_i\in P(w_{i-1})$ and 
$F_{\pi,r}(w_j) \le F_{\pi,r}(w_{j-1})$ for each $j\in \mathbb{N}$, 
we have  
\begin{eqnarray*}
\|w_i-v\| 
\le \frac{\epsilon ^i F_{\pi,r}(w_i) }{(1-\epsilon )C} 
\le \frac{\epsilon ^i F_{\pi,r}(w_0) }{(1-\epsilon )C}
\end{eqnarray*}
for all $v\in P(w_i)$. 
Therefore, 
for any $\epsilon ' > 0$, there exists $i\in \mathbb{N}$ 
such that, for every $j,k \ge i$, 
\begin{eqnarray*}
\|w_j-w_k\| 
\le \operatorname{diam} P(w_i) \le 2\frac{\epsilon ^i F_{\pi,r}(w_0)}
{(1-\epsilon )C} < \epsilon '. 
\end{eqnarray*}
Since $B$ is complete, the sequence $\{ w_i\}$ converges to some point $w_{\infty }\in B$.
We have $\| w_{\infty }\| \ge 1-\epsilon $, in particular, $w_{\infty } \not= 0$,
because $ \| w_i\|>1-\epsilon $ for all $i\in \mathbb{N}$.
Since the function 
\begin{eqnarray*}
F'_i(v) := \frac{F_{\pi,r}(w_i)-F_{\pi,r}(v)}{\|w_i-v\| }
\end{eqnarray*}
is upper semicontinuous on $B \backslash \{w_i\}$, the subset 
\begin{eqnarray*}
\{v\in B \backslash \{w_i\} : 
F'_i(v) < (1-\epsilon)C \} 
&=& B \backslash 
(\{v\in B \ : 
 F'_i(v) \ge (1-\epsilon)C \} \cup \{w_i\}) \\
&=& B \backslash (P(w_i)\cup \{w_i\})
\end{eqnarray*} 
is open, that is, $P(w_i) \cup \{w_i\}$ is closed for every $i$.
Hence  $\lim_{i\to \infty}\operatorname{diam} P(w_i)=0$ implies that $\bigcap_{i=0}^{\infty } (P(w_i) \cup \{w_i\})= \{w_{\infty }\}$.
However, by the assumption there exists $v_0\in B\backslash \{w_\infty \}$ such that 
\begin{eqnarray*}
 \frac{F_{\pi,r}(w_{\infty })-F_{\pi,r}(v_0)}
{\|w_{\infty }-v_0\|} \ge (1-\epsilon )C. 
\end{eqnarray*}   
Since $w_{\infty }\in P(w_{i+1}) \cup\{w_{i+1}\} \subset P(w_i)$, 
$F_{\pi,r}(v_0)<F_{\pi,r}(w_{\infty }) < F_{\pi,r}(w_i)$ for each $i$, in particular, $w_i \not= v_0$.
Thus we have  
\begin{eqnarray*}
\frac{F_{\pi,r}(w_i)-F_{\pi,r}(v_0)}{\|w_i-v_0\|} 
&\ge & \frac{(F_{\pi,r}(w_i)-F_{\pi,r}(w_{\infty })) + (F_{\pi,r}(w_{\infty })-F_{\pi,r}(v_0))}
{\|w_i-w_{\infty }\| + \|w_{\infty }-v_0\|} \\
&\ge & \frac{(1-\epsilon )C\|w_i-w_{\infty }\| + (1-\epsilon )C\|w_{\infty }-v_0\|}
{\|w_i-w_{\infty }\| + \|w_{\infty }-v_0 \|} \\
&=& (1-\epsilon )C 
\end{eqnarray*}
for every $i$. 
This implies that 
$v_0\in \bigcap _{i=1}^{\infty } (P(w_i) \cup \{w_i\}) 
= \{w_{\infty } \}$, 
that is, $w_{\infty } = v_0$. 
This contradicts $v_0\in B\backslash \{w_{\infty }\}$. 

Secondly, we treat the case where $\inf _{v\in P(w_0)}F_{\pi,r}(v)=0$.  
Take $w_1'\in P(w_0)$ such that $F_{\pi,r}(w_1')\le  \epsilon F_{\pi,r}(w_0)$. 
As the first case, $P(w'_1)  $ is a subset of $P(w_0)$.  
If $\inf _{v\in P(w'_1)}F_{\pi,r}(v)\not =0$, then we can deduce a contradiction as the first case. 
Hence, inductively, for each $i\in \mathbb{N}$, we suppose that $\inf_{v\in P(w'_i)} F_{\pi,r}(v)=0 $. 
Take $w'_i\in P(w'_{i-1})$ such that $F(w'_i) \le \epsilon F(w'_{i-1})$. 
Then we have $P(w'_i)\subset P(w'_{i-1})$ for each $i\in \mathbb{N}$.  
Thus for $u\in P(w'_i)$ we have 
\begin{eqnarray*}
\| w_i'-u\|  
\le  \frac{F(w'_i) - F(u)}{(1-\epsilon )C} 
\le  \frac{\epsilon F(w'_{i-1})}{(1-\epsilon )C} 
\le  \frac{\epsilon ^iF(w'_0)}{(1-\epsilon )C} .  
\end{eqnarray*} 
As the first case, $w'_i$ converges to some $w'_{\infty }\in B$ with $\| w'_{\infty }\|\ge 1-\epsilon $, 
and  
$\bigcap_{i=0}^{\infty } (P(w'_i) \cup \{w' _i\})= \{w' _{\infty }\}$.
Therefore we can deduce a contradiction as the first case. 

\end{proof}

\begin{proof}[Proof of Theorem \ref{thm:FB}]
Since $F_{\alpha,\ \!\! r}$ is continuous and convex, $\inf_{v\in B}|\nabla _{\!\!-}F_{\alpha,\ \!\! r}|(v) = 0$ by Lemma 5.4 in \cite{bibT}. 
Hence, if condition (\ref{item:thm:FB2}) holds, there exists $x_0\in N$ with $F_{\alpha,\ \!\! r}(x_0) = 0$. 
The point $x_0$ is a fixed point of $\alpha$. 

Suppose (i). 
Condition (i) is equivalent to the condition that the first cohomology $H^1(\Gamma ,\pi )$ vanishes, that is, $B^1(\pi )$ coincides with $Z^1(\pi )$. 
Since $\pi$ does not have non-zero invariant vectors, $d:B\to B^1(\pi )$ is one-to-one. 
Hence the open mapping theorem implies that the inverse map $d^{-1}$ of $d$ is bounded. 
Thus there exists $C>0$ satisfying $\| v\| = \| d^{-1} (dv) \| \le C \| dv\|_r $ 
for all $v\in B$. 
Take an arbitrary affine isometric action $\alpha \in \mathcal{A}(\pi)$. 
Then there exists a fixed point $v_0\in B$ of $\alpha $. 
Since $\pi (\gamma )v = \alpha (\gamma ) (v+v_0) - v_0$ for all $v\in B$ and $\gamma \in \Gamma $, we have $F_{\alpha ,\ \!\! r}(v+v_0)= F_{\pi ,\ \!\! r}(v)$ for all $v\in B$. 
Therefore we may assume that $\alpha $ coincides with $\pi$. 
By the definition of $d$, we have $F_{\pi ,\ \!\! r} (v) = \| dv\|_r$ for all $v\in B$. 
Hence we have 
$$ | \nabla _{\!\!-}  F_{\pi,\ \!\! r} |(v) 
\ge \lim_{\epsilon \to 0} 
\frac{ F_{\pi,\ \!\! r}(v)- F_{\pi,\ \!\! r}(v_\epsilon )} 
{\| v-v_\epsilon \|} 
=\lim_{\epsilon \to 0} 
\frac{\| dv \|_r - \| dv_\epsilon  \|_r} 
{ \epsilon \| v\|} 
= \lim_{\epsilon \to 0} 
\frac{\epsilon \| dv \|_r} 
{ \epsilon \| v\|} 
\ge  \frac{1}{C}  $$ 
for all non-zero vectors $v\in B$, where $v_\epsilon :=(1-\epsilon )v$ for $\epsilon >0$. 
Since $F_{\pi ,\ \!\! r}(0)=0$, we have completed the proof. 
Since $C$ is independent of each $\pi$-cocycle, the constant $C'$ in the theorem can be independent of each $\alpha $.  
\end{proof}


\section{A description of the absolute gradient of $F_{\alpha ,\ \!\! p}$}

In this section, we see a description of the absolute gradient of $F_{\alpha ,\ \!\! p}$ for an affine isometric action $\alpha $ of a finitely generated group $\Gamma $ on some Banach space $B$ and $1<p<\infty$.  
Suppose that the finite generating set $K$ and the weight $m$ is symmetric in this section. 

\begin{proposition}\label{prop:agscsBsp}
Suppose that 
$B$ is strictly convex, smooth and real.  
Let $\alpha = \pi +c$ be an affine isometric action of $\Gamma $ on $B$. 
Then for $v\in B$ with $F_{\alpha,\ \!\! p}(v)>0$ we have 
\begin{eqnarray*} 
&& |\nabla _{\!\!-}F_{\alpha,\ \!\! p}|(v) \\ 
&=& \frac{1}{F_{\alpha,\ \!\! p}(v)^{p-1}}
\sup_{v\in B;\|u\|=1}\sum_{\gamma\in K} \|\alpha (\gamma)v-v \|^{p-1}
j(\alpha (\gamma)v -v) (\pi(\gamma)u-u)  m(\gamma)\\
&=& 
\frac{2}{F_{\alpha,\ \!\! p}(v)^{p-1}}
\left\| \sum_{\gamma\in K} 
\|\alpha (\gamma)v-v\|^{p-1} m(\gamma) j(\alpha (\gamma)v-v) 
\right\|_{B^*} . 
\end{eqnarray*}
\end{proposition}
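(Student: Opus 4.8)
The plan is to compute the Gâteaux differential of $F_{\alpha,p}$ at $v$, to identify $|\nabla_{\!\!-}F_{\alpha,p}|(v)$ with the dual norm of that differential, and then to rewrite this dual norm in the stated symmetrized form using the cocycle identity. First I would check that $F_{\alpha,p}$ is Gâteaux differentiable at $v$. For $h\in B$ and $\gamma\in K$ the curve $t\mapsto\alpha(\gamma)(v+th)-(v+th)=(\alpha(\gamma)v-v)+t(\pi(\gamma)h-h)$ is affine in $t$, and for $1<p<\infty$ the map $w\mapsto\|w\|^p$ on $B$ is Gâteaux differentiable everywhere: away from $0$ its derivative is $p\|w\|^{p-1}j(w)$ by the formula for the duality map on a real smooth Banach space recalled before Definition \ref{usBsp}, and at $0$ the derivative vanishes because $\|th\|^p=o(t)$. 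Hence $G(w):=\sum_{\gamma\in K}\|\alpha(\gamma)w-w\|^p m(\gamma)=F_{\alpha,p}(w)^p$ is Gâteaux differentiable with
\[
\frac{d}{dt}G(v+th)\Big|_{t=0}=p\sum_{\gamma\in K}\|\alpha(\gamma)v-v\|^{p-1}j(\alpha(\gamma)v-v)(\pi(\gamma)h-h)\,m(\gamma),
\]
the terms with $\alpha(\gamma)v-v=0$ vanishing on both sides. Since $F_{\alpha,p}(v)>0$ gives $G(v)>0$, composing with $s\mapsto s^{1/p}$ on $(0,\infty)$ yields the differential
\[
DF_v(h)=\frac{1}{F_{\alpha,p}(v)^{p-1}}\sum_{\gamma\in K}\|\alpha(\gamma)v-v\|^{p-1}j(\alpha(\gamma)v-v)(\pi(\gamma)h-h)\,m(\gamma),
\]
a bounded functional because $F_{\alpha,p}$ is $2$-Lipschitz.

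Next I would show $|\nabla_{\!\!-}F_{\alpha,p}|(v)=\|DF_v\|_{B^*}$. By Proposition \ref{prop:agsup}, writing a general $u\ne v$ as $u=v+th$ with $\|h\|=1$ and $t=\|u-v\|>0$: convexity of $F_{\alpha,p}$ makes $t\mapsto t^{-1}\!\left(F_{\alpha,p}(v+th)-F_{\alpha,p}(v)\right)$ nondecreasing, so its infimum over $t>0$ is $DF_v(h)$ and $\sup_{t>0}t^{-1}\!\left(F_{\alpha,p}(v)-F_{\alpha,p}(v+th)\right)=-DF_v(h)$. Therefore
\[
|\nabla_{\!\!-}F_{\alpha,p}|(v)=\max\Big\{\sup_{\|h\|=1}\bigl(-DF_v(h)\bigr),\,0\Big\}=\sup_{\|h\|=1}DF_v(h)=\|DF_v\|_{B^*},
\]
where I replaced $h$ by $-h$ and used that the supremum is already nonnegative. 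Inserting the formula for $DF_v$ gives the first claimed equality.

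Finally I would derive the symmetrized form. Put $b(\gamma):=\alpha(\gamma)v-v$; this is a $\pi$-cocycle (indeed $b=dv+c$), so the cocycle identity at $(\gamma,\gamma^{-1})$ gives $b(\gamma^{-1})=-\pi(\gamma)^{-1}b(\gamma)$, whence $\|b(\gamma^{-1})\|=\|b(\gamma)\|$. Since $B$ is real and smooth, uniqueness of support functionals yields $j(-w)=-j(w)$ and $j(Tw)=j(w)\circ T^{-1}$ for every surjective linear isometry $T$; with $T=\pi(\gamma)^{-1}$ this gives $j(b(\gamma^{-1}))=-\,j(b(\gamma))\circ\pi(\gamma)$. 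Put $\psi:=\sum_{\gamma\in K}\|b(\gamma)\|^{p-1}m(\gamma)\,j(b(\gamma))\in B^*$; then, re-indexing $\gamma\mapsto\gamma^{-1}$ (legitimate since $K$ is symmetric and $\|b(\gamma^{-1})\|=\|b(\gamma)\|$, $m(\gamma^{-1})=m(\gamma)$),
\[
\sum_{\gamma\in K}\|b(\gamma)\|^{p-1}m(\gamma)\,j(b(\gamma))(\pi(\gamma)u)=-\sum_{\gamma\in K}\|b(\gamma)\|^{p-1}m(\gamma)\,j(b(\gamma^{-1}))(u)=-\psi(u),
\]
so $F_{\alpha,p}(v)^{p-1}DF_v(u)=\sum_{\gamma\in K}\|b(\gamma)\|^{p-1}m(\gamma)\,j(b(\gamma))(\pi(\gamma)u-u)=-2\psi(u)$. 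Taking the supremum over $\|u\|=1$ and replacing $u$ by $-u$ gives $|\nabla_{\!\!-}F_{\alpha,p}|(v)=2\|\psi\|_{B^*}/F_{\alpha,p}(v)^{p-1}$, the second equality.

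The differentiation in the first step is routine; the delicate point is the last identity $F_{\alpha,p}(v)^{p-1}DF_v=-2\psi$. It relies on combining the cocycle relation with both functorial properties of the duality map — the sign change $j(-w)=-j(w)$ and the intertwining $j(Tw)=j(w)\circ T^{-1}$, both of which genuinely use that $B$ is real and smooth — together with the symmetry of $K$ and $m$, and keeping the re-indexing straight so that the norm and weight factors line up is where the care lies.
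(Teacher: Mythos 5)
Your proposal is correct and follows essentially the same route as the paper: convexity reduces the absolute gradient to a supremum of directional derivatives, smoothness of $B$ (via the duality map $j$) computes those derivatives, and the cocycle identity $b(\gamma^{-1})=-\pi(\gamma^{-1})b(\gamma)$ together with the symmetry of $K$ and $m$ produces the factor $2$ and the dual norm. The only (harmless) difference is technical: you differentiate $F_{\alpha,p}^{\,p}$ term by term and apply the chain rule, whereas the paper sandwiches the difference quotient of $F_{\alpha,p}$ itself using the inequality $pb^{p-1}(a-b)\le a^p-b^p\le pa^{p-1}(a-b)$.
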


\begin{proof} 
Fix $v\in B$ such that $F_{\alpha,\ \!\! p}(v)>0$. 
Since $F_{\alpha ,\ \!\! p}$ is convex, for $t\ge s >0$, we have 
\begin{eqnarray*}
F_{\alpha,\ \!\! p} (v+su) \le \left( 1-\frac{s}{t}\right) F_{\alpha,\ \!\! p}(v) + \frac{s}{t} F_{\alpha,\ \!\! p}(v+tu). 
\end{eqnarray*}
Therefore we have
\begin{eqnarray*}
\frac{F_{\alpha,\ \!\! p}(v)-F_{\alpha,\ \!\! p}(v+tu)}{t} \le \frac{F_{\alpha,\ \!\! p}(v)-F_{\alpha,\ \!\! p}(v+su)}{s} . 
\end{eqnarray*}
This implies that 
\begin{eqnarray*}
\lim_{\epsilon \to 0} \frac{F_{\alpha,\ \!\! p}(v)-F_{\alpha,\ \!\! p}(v+\epsilon u)}{\epsilon }
= \sup_{s >0} \frac{F_{\alpha,\ \!\! p}(v)-F_{\alpha,\ \!\! p}(v+su)}{s}. 
\end{eqnarray*} 
Hence we have 
\begin{eqnarray*}
\limsup_{u\to v,\ \!\! u\in B}
\frac{F_{\alpha,\ \!\! p}(v)-F_{\alpha,\ \!\! p}(u)}{\|v-u\|} 
= \sup_{u\in B;\| u\|=1}
\lim_{\epsilon \to 0} 
\frac{F_{\alpha,\ \!\! p}(v)-F_{\alpha,\ \!\! p}(v +\epsilon u)}
{\epsilon } . 
\end{eqnarray*}
To calculate the right hand side, we use an inequality in \cite[(2.15.1)]{bibHLP}: 
\begin{eqnarray*}
pb^{p-1}(a-b) \le a^p-b^p \le pa^{p-1}(a-b) 
\end{eqnarray*} 
for $a,b>0$. 
Set $Du(\gamma):= \alpha (\gamma)u-u$ for each $\gamma \in K$ and $u\in B$. 
Then, for a small $\epsilon >0$, we have 
\begin{eqnarray*}
&&\frac{F_{\alpha,\ \!\! p}(v)-F_{\alpha,\ \!\! p}(v +\epsilon u)}{\epsilon } \\
&\le & \frac{F_{\alpha,\ \!\! p}(v)^p-F_{\alpha,\ \!\! p}(v +\epsilon u)^p}
{pF_{\alpha,\ \!\! p}(v +\epsilon u)^{p-1}\epsilon } \\
&=& \sum_{\gamma\in K} \frac{ \|Dv(\gamma )\|^p  
- \|D(v+\epsilon u)(\gamma )\| ^p }
{pF_{\alpha,\ \!\! p}(v +\epsilon u)^{p-1}\epsilon } m(\gamma)\\ 
&\le &  \sum_{\gamma\in K} 
\left( 
\frac{\|Dv(\gamma)\|^{p-1}}
{F_{\alpha,\ \!\! p}(v +\epsilon u)^{p-1}} 
\frac{\|Dv(\gamma)\| - \|D(v+\epsilon u)(\gamma)\|}{\epsilon }
\right) m(\gamma) . 
\end{eqnarray*}
Similarly, we have 
\begin{eqnarray*}
&&\frac{F_{\alpha,\ \!\! p}(v)-F_{\alpha,\ \!\! p}(v +\epsilon u)}{\epsilon } \\ 
&\ge & \sum_{\gamma\in K} \left( 
\frac{\|D(v+\epsilon u)(\gamma) \|^{p-1}}
{F_{\alpha,\ \!\! p}(v)^{p-1}} 
\frac{\|Dv(\gamma)\| - \|D(v+\epsilon u)(\gamma)\|}
{\epsilon }\right) m(\gamma). 
\end{eqnarray*}
Since $B$ is real and smooth, for $\gamma \in K$ such that $Dv(\gamma ) \not= 0$, 
\begin{eqnarray*}
\lim _{\epsilon \to 0} 
\frac{\| Dv(\gamma)\| - \|D(v+\epsilon u)(\gamma )\| }
{\epsilon}  
&=& \lim _{\epsilon \to 0} 
\frac{\| Dv(\gamma )\| - \|Dv(\gamma ) + \epsilon du(\gamma )\|}
{\epsilon} \\  
&=& -j(Dv(\gamma )) (du(\gamma )) , 
\end{eqnarray*}
and, for $\gamma \in K$ such that $Dv(\gamma) =0$,  
\begin{eqnarray*}
\lim _{\epsilon \to 0} 
\frac{\| Dv(\gamma )\| - \|D(v+\epsilon u)(\gamma )\| }
{\epsilon}  
= \lim _{\epsilon \to 0} 
\frac{-\|\epsilon du(\gamma )\|}
{\epsilon} 
= -\|du(\gamma )\| . 
\end{eqnarray*}
Therefore we have 
\begin{eqnarray*}
&&\lim_{\epsilon \to 0} 
\frac{F_{\alpha,\ \!\! p}(v)-F_{\alpha,\ \!\! p}(v +\epsilon u)}
{\epsilon } \\ 
&=& 
\sum_{\gamma\in K} 
\frac{\|Dv(\gamma ) \|^{p-1}}{F_{\alpha,\ \!\! p}(v)^{p-1}} 
(-j(Dv(\gamma )) du(\gamma ))  m(\gamma)\\ 
&=& \frac{1}{F_{\alpha,\ \!\! p}(v)^{p-1}} 
\sum_{\gamma\in K}  
\|Dv(\gamma)\|^{p-1} (j(Dv(\gamma))u - j(Dv(\gamma )) \pi (\gamma )u) m(\gamma) .
\end{eqnarray*} 
Since there is $u\in B$ at which this limit is nonnegative, the first equality in the proposition is proved. 
To prove the last line of this equality, we continue the computation. 
For arbitrary $\gamma \in K$, since $\pi (\gamma )$ is a surjective linear isometry,
$$\| \pi^{\#} (\gamma ^{-1}) j(Dv(\gamma )) \|_{B^*} = \| j(Dv(\gamma )) \|_{B^*}= 1$$ 
where $\pi^{\#} (\gamma ^{-1}) w^* (v) := w^* (\pi(\gamma)v) $ for $w^*\in B^*$ and $v\in B$, and 
$$\left( \pi^{\#} (\gamma ^{-1}) j(Dv(\gamma )) \right)
\left( \pi(\gamma ^{-1})Dv(\gamma ) \right)
= \| Dv(\gamma ) \|
= \| \pi(\gamma ^{-1}) Dv(\gamma )\|. $$ 
Due to the smoothness of $B$, 
$\pi^{\#} (\gamma ^{-1}) j(Dv(\gamma ))$ coincides with $j(\pi(\gamma ^{-1})Dv(\gamma )) $.  
Since $c(e)=0$ for the identity element $e$ of $\Gamma $,  
we have $\pi(\gamma ^{-1})c(\gamma ) +c(\gamma ^{-1}) = c(\gamma ^{-1}\gamma ) =0$ for all $\gamma , \gamma ' \in \Gamma $. 
Hence 
\begin{eqnarray*}
\pi(\gamma ^{-1})Dv(\gamma ) 
&=& \pi(\gamma ^{-1})\alpha (\gamma ) v - \pi(\gamma ^{-1}) v\\ 
&=& \pi(\gamma ^{-1})\pi(\gamma ) v  
+ \pi(\gamma ^{-1}) c(\gamma ) 
- \pi(\gamma ^{-1}) v\\
&=& v - c(\gamma ^{-1}) -\pi(\gamma ^{-1}) v\\
&=& - Dv(\gamma ^{-1}) . 
\end{eqnarray*}
We get $\pi^{\#} (\gamma ^{-1}) j(Dv(\gamma ))= j(-Dv(\gamma ^{-1})) = -j(Dv(\gamma ^{-1}))$. 
Because  
\begin{eqnarray*}
\| Dv(\gamma ^{-1})\| 
= \|\pi(\gamma ^{-1}) Dv(\gamma )\| 
= \| Dv(\gamma )\|  
\end{eqnarray*}
and $m$ is symmetric, we have 
\begin{eqnarray*}
&&\sum_{\gamma \in K} \|Dv(\gamma )\|^{p-1}
(j(Dv(\gamma))u - j(Dv(\gamma )) (\pi (\gamma )u)) m(\gamma)\\
&=& \sum_{\gamma \in K}
\|Dv(\gamma )\|^{p-1} (j(Dv(\gamma ))u + j(Dv(\gamma ^{-1}))u) m(\gamma)\\
&=& 2\sum_{\gamma \in K}
\|Dv(\gamma ) \|^{p-1} (j(Dv(\gamma ) ) u) m(\gamma). 
\end{eqnarray*}
Therefore we obtain 
\begin{eqnarray*}
&&\limsup_{u\to v,\ \!\! u\in B}
\frac{F_{\alpha,\ \!\! p}(v)-F_{\alpha,\ \!\! p}(u)}{\|v-u\|} \\
&=& \sup_{u\in B;\| u\|=1} \frac{1}{F_{\alpha,\ \!\! p}(v)^{p-1}}
\left(2 \sum_{\gamma \in K}
\|Dv(\gamma) \|^{p-1} m(\gamma)j(Dv(\gamma )) \right)u \\
&=& \frac{2}{F_{\alpha,\ \!\! p}(v)^{p-1}} 
\left\| \sum_{\gamma \in K}
\|Dv(\gamma)\|^{p-1}m(\gamma)j(Dv(\gamma)) \right\|_{B^*} .  
\end{eqnarray*}   
\end{proof}

\begin{proposition}\label{prop:agucusBsp}
Suppose that $B$ is uniformly convex and uniformly smooth. 
Let $\alpha = \pi +c$ be an affine isometric action of $\Gamma $ on $B$.
Then for $v\in B$ with $F_{\alpha,\ \!\! p}(v)>0$ we have 
\begin{eqnarray*} 
&&|\nabla _{\!\!-}F_{\alpha,\ \!\! p}|(v) \\ 
&=& \frac{1}{F_{\alpha,\ \!\! p}(v)^{p-1}}
\sup_{v\in B;\|u\|=1}\sum_{\gamma\in K} \|\alpha (\gamma)v -v\|^{p-1}
\Re j(\alpha (\gamma)v-v) (\pi (\gamma)u-u)  m(\gamma)\\
&=& \frac{2}{F_{\alpha,\ \!\! p}(v)^{p-1}}
\left\| \sum_{\gamma\in K} 
\|\alpha (\gamma)v-v\|^{p-1} m(\gamma) j(\alpha (\gamma)v-v) 
\right\|_{B^*} . 
\end{eqnarray*} 
\end{proposition}

\begin{proof} 
Set $Du(\gamma):= \alpha (\gamma)u-u$ for each $u\in B$ and $\gamma \in K$.  
Fix $v\in B$ such that $F_{\alpha,\ \!\! p}(v)>0$. 
As in the proof of Proposition \ref{prop:agscsBsp}, for a small $\epsilon >0$ we have 
\begin{eqnarray*}
&&\sum_{\gamma\in K} 
\left( 
\frac{\|Dv(\gamma)\|^{p-1}}
{F_{\alpha,\ \!\! p}(v +\epsilon u)^{p-1}} 
\frac{\|Dv(\gamma)\| - \|D(v+\epsilon u)(\gamma)\|}{\epsilon }
\right) m(\gamma)\\ 
&\ge & \frac{F_{\alpha,\ \!\! p}(v)-F_{\alpha,\ \!\! p}(v +\epsilon u)}{\epsilon }\\  
&\ge & \sum_{\gamma\in K} \left( 
\frac{\|D(v+\epsilon u)(\gamma) \|^{p-1}}
{F_{\alpha,\ \!\! p}(v)^{p-1}} 
\frac{\|Dv(\gamma)\| - \|D(v+\epsilon u)(\gamma)\|}{\epsilon }
\right) m(\gamma). 
\end{eqnarray*}
Because $D(v+\epsilon u)(\gamma ) = Dv(\gamma ) + \epsilon du(\gamma )$, for $\gamma \in K$ such that $Dv(\gamma ) \not =0$, we get 
\begin{eqnarray*}
\|Dv(\gamma ) \| -\| D(v+\epsilon u)(\gamma )\| 
&\le & \Re j(Dv(\gamma )) 
(Dv(\gamma ) - D(v+\epsilon u)(\gamma ) ) \\
&=& -\epsilon \Re j( Dv(\gamma )) (du(\gamma )) , 
\end{eqnarray*}
and 
\begin{eqnarray*}
\|Dv(\gamma ) \| -\| D(v+\epsilon u)(\gamma )\| 
&\ge & \Re j(D(v+\epsilon u)(\gamma ) ) 
(Dv(\gamma ) - D(v+\epsilon u)(\gamma ) ) \\
&=& -\epsilon \Re j(D(v+\epsilon u)(\gamma )) (du(\gamma )) . 
\end{eqnarray*}
Since $D(v+\epsilon u)(\gamma ) \not =0$ for small $\epsilon>0$, we obtain 
\begin{eqnarray*}
&&\left\| 
\frac{D(v+\epsilon u)(\gamma ) }{\| D(v+\epsilon u)(\gamma ) \|} 
-\frac{Dv(\gamma )}{\| Dv(\gamma )\|}\right\| \\
&=& \left\| 
\frac{Dv(\gamma ) + \epsilon du(\gamma )}
{\| D(v+\epsilon u)(\gamma ) \|}
-\frac{Dv(\gamma )}{\| D(v+\epsilon u)(\gamma ) \|}
\frac{\| D(v+\epsilon u)(\gamma ) \|}{\| Dv(\gamma )\|}\right\| \\
&=& \left\| \left( 1 
- \frac{\| D(v+\epsilon u)(\gamma ) \|}{\| Dv(\gamma )\|}
\right) \frac{Dv(\gamma ) }
{\| D(v+\epsilon u)(\gamma ) \|}
+ \epsilon \frac{du(\gamma )}
{\| D(v+\epsilon u)(\gamma ) \|}
\right\| \\
&\le & \left| 1 - \frac{\| D(v+\epsilon u)(\gamma ) \|}{\| Dv(\gamma )\|} \right| 
\frac{\|Dv(\gamma )\| }
{\| D(v+\epsilon u)(\gamma ) \|}
+ \epsilon \frac{\|du(\gamma )\|}
{\| D(v+\epsilon u)(\gamma ) \|} \\
&\to& 0 \ \ (\text{as } \epsilon \to 0). 
\end{eqnarray*} 
Hence, by Proposition \ref{prop:usconti} and the uniform smoothness of $B$, $j(D(v+\epsilon u)(\gamma ) )$ converges to $j(Dv(\gamma ))$ in $B^*$ as $\epsilon \to 0$. 
On the other hand, for $\gamma \in K$ such that $Dv(\gamma ) =0$, we have 
\begin{eqnarray*}
\frac{\| Dv(\gamma )\| - \|D(v+\epsilon u)(\gamma )\| }
{\epsilon}  
= \frac{-\|\epsilon du (\gamma )\|}
{\epsilon} 
= -\|du (\gamma )\| . 
\end{eqnarray*}
Hence we have 
$$\lim_{\epsilon \to 0} 
\frac{F_{\alpha,\ \!\! p}(v)-F_{\alpha,\ \!\! p}(v +\epsilon u)}
{\epsilon } 
= -\sum_{\gamma\in K} \left( 
\frac{\|Dv(\gamma)\|^{p-1}}{F_{\alpha,\ \!\! p}(v)^{p-1}} 
\Re j(Dv(\gamma )) (du(\gamma )) 
\right) m(\gamma) . $$
Therefore, using the equality $\|w^*\|_{B^*}=\max\{ |\Re(w^*(v))| : v\in B, \|v\|=1\} $ for $w^*\in B^*$, 
as in the proof of Proposition \ref{prop:agscsBsp}, the proposition follows.  
\end{proof}

\begin{corollary}\label{cor:agLpsp}
Let $\alpha $ be an affine isometric action of $\Gamma $ on $L\sp p(W,\nu )$, where $1<p<\infty$ and $(W,\nu )$ is a measure space. 
For any $f\in L\sp p(W,\nu )$ such that $F_{\alpha,\ \!\! p}(f)>0$, we have $$|\nabla _{\!\!-}F_{\alpha,\ \!\! p}|(f) = 2\|G_{\alpha,\ \!\! p}(f)\|_{L\sp q(W,\nu)} /F_{\alpha,\ \!\! p}(f)^{p-1} .$$ 
Here $q$ is the conjugate exponent of $p$, that is, $q=p/(p-1)$, and 
\begin{eqnarray*} 
G_{\alpha,\ \!\! p}(f)(x) = \sum_{\gamma\in K} 
| \alpha(\gamma)f(x)-f(x)|^{p-2} 
(\alpha (\gamma)f(x)-f(x))m(\gamma)  
\end{eqnarray*}
for $x\in W$, 
where $| \alpha(\gamma)f(x)-f(x)|^{p-2}=0$ if $f(x) = \alpha(\gamma)f(x)$ and $p<2$. 
\end{corollary}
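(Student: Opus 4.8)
The plan is to derive Corollary \ref{cor:agLpsp} as a direct specialization of Proposition \ref{prop:agucusBsp}, so the first task is to check that $L^p(W,\nu)$ with $1<p<\infty$ satisfies the hypotheses of that proposition, namely that it is uniformly convex and uniformly smooth; this is classical (Clarkson's inequalities), and we may invoke it without proof. Then $F_{\alpha,\ \!\! p}(f)^{p-1}|\nabla_{\!\!-}F_{\alpha,\ \!\! p}|(f) = 2\bigl\| \sum_{\gamma\in K} \|\alpha(\gamma)f-f\|^{p-1} m(\gamma)\,\Re j(\alpha(\gamma)f-f)\bigr\|_{B^*}$ from Proposition \ref{prop:agucusBsp}, and the remaining work is purely to identify the functional inside the norm with the element $G_{\alpha,\ \!\! p}(f)\in L^q(W,\nu)$ under the canonical isometric isomorphism $L^q(W,\nu)\cong L^p(W,\nu)^*$.

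The key step is therefore the explicit description of the duality map $j$ on $L^p$. First I would recall that for $g\in L^p(W,\nu)\setminus\{0\}$ the unique support functional is $j(g) = \|g\|_{L^p}^{1-p}\,|g|^{p-2}\bar g$, viewed as an element of $L^q$ acting by $h\mapsto \int_W (j(g))\,h\,d\nu$; one checks $\|j(g)\|_{L^q}=1$ and $j(g)(g)=\|g\|_{L^p}$, and uniqueness follows from smoothness of $L^p$. Substituting $g=\alpha(\gamma)f-f$ and absorbing the scalar $\|\alpha(\gamma)f-f\|^{p-1}$ cancels the normalization factor $\|g\|_{L^p}^{1-p}$, leaving $\|\alpha(\gamma)f-f\|^{p-1}\Re j(\alpha(\gamma)f-f) = \Re\bigl(|\alpha(\gamma)f-f|^{p-2}\,\overline{(\alpha(\gamma)f-f)}\bigr)$ as an $L^q$-function; pointwise this is $|\alpha(\gamma)f(x)-f(x)|^{p-2}\Re(\alpha(\gamma)f(x)-f(x))$, exactly the $\gamma$-th summand of $G_{\alpha,\ \!\! p}(f)$. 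Summing over $\gamma$ with weights $m(\gamma)$ identifies the functional with $G_{\alpha,\ \!\! p}(f)$, and the dual-norm becomes $\|G_{\alpha,\ \!\! p}(f)\|_{L^q(W,\nu)}$, which gives the claimed formula after dividing by $F_{\alpha,\ \!\! p}(f)^{p-1}$.

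Two small care points remain. One is the convention when $p<2$ and $\alpha(\gamma)f(x)=f(x)$ on a set of positive measure: there $|\alpha(\gamma)f(x)-f(x)|^{p-2}$ would be $0^{p-2}=+\infty$, so one must adopt the stated convention that this factor is $0$; this matches the fact that $j(g)$ is defined pointwise only $\nu$-a.e.\ on the support of $g$ and the product $|g|^{p-2}\bar g = |g|^{p-1}\operatorname{sgn}\bar g$ extends continuously by $0$ where $g=0$, so the summand genuinely vanishes there. The other is that $G_{\alpha,\ \!\! p}(f)\in L^q(W,\nu)$: each summand has $|\,|\alpha(\gamma)f-f|^{p-2}\Re(\alpha(\gamma)f-f)\,| \le |\alpha(\gamma)f-f|^{p-1}$, whose $q$-th power is $|\alpha(\gamma)f-f|^{(p-1)q}=|\alpha(\gamma)f-f|^p\in L^1$, so finiteness of the $L^q$-norm is automatic and the functional is well-defined.

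I do not expect a genuine obstacle here; the whole corollary is bookkeeping once Proposition \ref{prop:agucusBsp} is in hand. The only mildly delicate point is being scrupulous about the $p<2$ degenerate case and about the real-part convention (the paper works with possibly complex $L^p$, which is why $\Re j$ appears), but neither requires any real argument beyond stating the convention as the corollary already does.
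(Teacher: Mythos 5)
Your proposal is correct and is essentially the paper's own proof: the paper likewise obtains the corollary by specializing Proposition \ref{prop:agucusBsp} to $L\sp p(W,\nu)$, exhibiting the duality map $j(g)=|g|^{p-2}\bar g/\|g\|_{L\sp p(W,\nu)}^{p-1}$ and verifying the two defining integrals ($j(g)(g)=\|g\|_{L\sp p}$ and $\|j(g)\|_{L\sp q}=1$). You merely spell out the bookkeeping the paper leaves implicit --- the cancellation of the normalization factor, the $p<2$ convention, and the membership $G_{\alpha,\ \!\! p}(f)\in L\sp q(W,\nu)$.
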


\begin{proof}
For $f\in L\sp p (W,\nu )$, we have $j(f) = |f|^{p-2} \bar f/\|f\|_{L\sp p(W,\nu)}^{p-1}$, where $\bar f$ is the complex conjugation of $f$. 
Indeed, we have  
\begin{eqnarray*}
\int _W
\left(\frac{|f(x)|^{p-2} \bar f(x)}{\|f\|_{L\sp p(W,\nu )}^{p-1}} \right) 
f(x) d\nu (x)
= \int _W\frac{|f(x)|^p}{\|f\|_{L\sp p(W,\nu )}^{p-1}}  d\nu (x)
= \|f\|_{L\sp p(W,\nu )} 
\end{eqnarray*}
and  
$$
\int _W\left| 
\frac{|f(x)|^{p-2} \bar f(x)}{\|f\|_{L\sp p(W,\nu )}^{p-1}} 
\right| ^q d\nu (x) 
= \int _W\frac{|f(x)|^{(p-1)q} }{\|f\|_{L\sp p(W,\nu )}^{(p-1)q}}  d\nu (x) 
= \int _W\frac{|f(x)|^p }{\|f\|_{L\sp p(W,\nu )}^p} d\nu (x) 
= 1 
$$
We have thus proved the corollary.  
\end{proof}

Since $\alpha (\gamma )v-v=(dv+c)(\gamma )$ for all $\gamma \in \Gamma $ and  $v\in B$,  
by Proposition \ref{prop:agscsBsp} and Proposition \ref{prop:agucusBsp}, if $B$ is strictly convex, smooth and real, or uniformly convex and uniformly smooth, then for  $1<p<\infty$ 
\begin{eqnarray*} 
|\nabla _{\!\!-}F_{\alpha,\ \!\! p}|(v) 
= \frac{2}{\| dv + c \|_p^{p-1}} 
\left\| \sum_{\gamma\in K} 
\|(dv+c)(\gamma)\|^{p-1} m(\gamma) j((dv+c)(\gamma)) 
\right\|_{B^*} 
\end{eqnarray*}
for all $v\in B$ such that $\| dv + c \|_p>0$. 
Hence for $C>0$, $|\nabla _{\!\!-}F_{\alpha,\ \!\! p}|(v) \ge C$ for all $v\in B$ such that $F_{\alpha,\ \!\! p}(v)>0$  if and only if 
\begin{eqnarray*}
\left\| \sum_{\gamma\in K} 
\|(dv+c)(\gamma)\|^{p-1} m(\gamma) j((dv+c)(\gamma)) 
\right\|_{B^*} 
\ge \frac{C}{2}\| dv + c \|_p^{p-1} 
\end{eqnarray*}
for all $v\in B$. 
From Theorem \ref{thm:TB}, we have 

\begin{corollary}\label{cor:TB} 
Let $\pi$ be a linear isometric action of $\Gamma$ on $B$ without non-zero invariant vectors. 
Suppose that $B$ is either strictly convex, smooth and real, or uniformly convex and uniformly smooth. 
Then the following are equivalent. 
\begin{enumerate} 
\item 
There is a positive constant $C'$ such that every $v\in B$ satisfies $$\max_{\gamma \in K}\| \pi(\gamma ,v)-v\|\ge C'\|v\|.$$ 
\item 
There is a positive constant $C$ such that 
\begin{eqnarray*}
\left\| \sum_{\gamma\in K} 
\|dv(\gamma)\|^{p-1} m(\gamma) j(dv(\gamma)) 
\right\|_{B^*}\ge C \| dv\|_p^{p-1} 
\end{eqnarray*} 
for all $v\in B$. 
\end{enumerate} 
\end{corollary}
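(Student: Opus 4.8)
The plan is to deduce Corollary~\ref{cor:TB} immediately by combining Theorem~\ref{thm:TB} (applied with $r=p$) with the closed form for the absolute gradient obtained from Proposition~\ref{prop:agscsBsp} and Proposition~\ref{prop:agucusBsp}. Under either of the two hypotheses on $B$ listed in the statement, one of these two propositions applies to $\alpha=\pi$ (i.e.\ with cocycle part $c=0$), and this is exactly the content already packaged into the displayed identity preceding the corollary.

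First I would record the reduction coming from the assumption that $\pi$ has no fixed nontrivial vector: then $d$ is injective, so $F_{\pi,\ \!\! p}(v)=\|dv\|_p>0$ for every $v\in B\setminus\{0\}$ while $F_{\pi,\ \!\! p}(0)=0$. Hence the hypothesis ``$F_{\alpha,\ \!\! p}(v)>0$'' under which the gradient formula was established coincides, for $\alpha=\pi$, with the condition $v\neq 0$, and the formula reads
\begin{eqnarray*}
|\nabla _{\!\!-}F_{\pi,\ \!\! p}|(v)
= \frac{2}{\| dv\|_p^{p-1}}
\left\| \sum_{\gamma\in K}
\|dv(\gamma)\|^{p-1} m(\gamma) \Re j(dv(\gamma))
\right\|_{B^*}
\end{eqnarray*}
for all $v\in B\setminus\{0\}$. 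Consequently, for a constant $C>0$ the bound $|\nabla _{\!\!-}F_{\pi,\ \!\! p}|(v)\ge C$ for all $v\in B\setminus\{0\}$ is equivalent to
\begin{eqnarray*}
\left\| \sum_{\gamma\in K}
\|dv(\gamma)\|^{p-1} m(\gamma) \Re j(dv(\gamma))
\right\|_{B^*}
\ge \frac{C}{2}\,\| dv\|_p^{p-1}
\end{eqnarray*}
for all $v\in B\setminus\{0\}$, and the latter inequality extends harmlessly to $v=0$, where both sides vanish. Thus condition (ii) of the corollary is equivalent to condition (\ref{item:thm:TB2}) of Theorem~\ref{thm:TB} (with $r=p$), after replacing the constant $C$ by $C/2$.

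Finally I would invoke Theorem~\ref{thm:TB}, which gives the equivalence of (\ref{item:thm:TB1}) and (\ref{item:thm:TB2}); since (\ref{item:thm:TB1}) is literally condition (i) of the corollary, chaining the two equivalences completes the argument. There is no genuine obstacle here: the analytic work is entirely in Theorem~\ref{thm:TB} and in the two gradient computations. The only points deserving a sentence of care are the reduction $F_{\pi,\ \!\! p}(v)>0 \iff v\neq 0$ via the no-fixed-nontrivial-vector hypothesis (so that the gradient formula is available on all of $B\setminus\{0\}$), and the trivial inclusion of the zero vector into the dual-norm inequality so that (ii) can be stated for all $v\in B$.
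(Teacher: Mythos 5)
Your proof is correct and follows essentially the same route as the paper: the author likewise specializes the gradient identity of Propositions \ref{prop:agscsBsp} and \ref{prop:agucusBsp} to $\alpha=\pi$ (cocycle $c=0$) and then invokes Theorem \ref{thm:TB} with $r=p$, with the same observation that the inequality in (ii) holds trivially at $v=0$. Your explicit remark that the no-fixed-nontrivial-vector hypothesis makes $F_{\pi,\ \!\! p}(v)>0$ exactly on $B\backslash\{0\}$ is the same (implicit) reduction the paper uses, under the standing assumption that $K$ and $m$ are symmetric.
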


There exists a one-to-one correspondence between $Z^1(\pi )$ and $\mathcal{A}(\pi)$ if $\pi $ has no non-zero invariant vector and the origin of $B$ is fixed. 
Since $dv+c$ is a $\pi$-cocycle, from Theorem \ref{thm:FB}, we have   
 
\begin{corollary}\label{cor:coho}
Let $\pi$ be a linear isometric action of $\Gamma$ on $B$ without non-zero invariant vectors. 
Suppose that $B$ is either strictly convex, smooth and real, or uniformly convex and uniformly smooth. 
Then every $\alpha \in \mathcal{A}(\pi)$ has a fixed point if and only if there exists $C>0$ such that 
\begin{eqnarray*}
\left\| \sum_{\gamma\in K} 
\|c(\gamma)\|^{p-1} m(\gamma) j(c(\gamma)) 
\right\|_{B^*}\ge C \| c\|_p^{p-1} 
\end{eqnarray*} 
for all $c\in Z^1(\pi )$. 
\end{corollary}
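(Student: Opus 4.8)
The plan is to deduce Corollary~\ref{cor:coho} by inserting the explicit description of $|\nabla_{\!\!-}F_{\alpha,\ \!\! p}|$ into Theorem~\ref{thm:FB}, taking $r=p$ with $1<p<\infty$ throughout. The hypothesis on $B$ is precisely what makes Proposition~\ref{prop:agscsBsp} or Proposition~\ref{prop:agucusBsp} applicable, so the formula displayed just before Corollary~\ref{cor:TB} holds: for $\alpha=\pi+c_0$ and $v\in B$ with $F_{\alpha,\ \!\! p}(v)=\|dv+c_0\|_p>0$,
\begin{eqnarray*}
|\nabla_{\!\!-}F_{\alpha,\ \!\! p}|(v)=\frac{2}{\|dv+c_0\|_p^{p-1}}\left\|\sum_{\gamma\in K}\|(dv+c_0)(\gamma)\|^{p-1}m(\gamma)\Re j((dv+c_0)(\gamma))\right\|_{B^*},
\end{eqnarray*}
and $dv+c_0\in Z^1(\pi)$. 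Since a cocycle vanishing on $K$ is zero ($m$ is positive on $K$ and a cocycle is determined by its values on $K$), both sides of the asserted inequality vanish on the zero cocycle, so for a fixed $\alpha=\pi+c_0$ condition~(\ref{item:thm:FB2}) of Theorem~\ref{thm:FB} with constant $C$ is equivalent to
\begin{eqnarray*}
\left\|\sum_{\gamma\in K}\|c(\gamma)\|^{p-1}m(\gamma)\Re j(c(\gamma))\right\|_{B^*}\ge\frac{C}{2}\,\|c\|_p^{p-1}
\end{eqnarray*}
holding for every cocycle $c$ of the form $dv+c_0$, $v\in B$.

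For the ``only if'' direction I would assume that every $\alpha\in\mathcal{A}(\pi)$ has a fixed point. By Theorem~\ref{thm:FB}, strengthened by its ``Furthermore'' clause, there is a single $C>0$, independent of $\alpha$, with $|\nabla_{\!\!-}F_{\alpha,\ \!\! p}|(v)\ge C$ whenever $F_{\alpha,\ \!\! p}(v)>0$. Given an arbitrary $c\in Z^1(\pi)$, apply this with $\alpha=\pi+c$ and $v=0$: since $d$ is linear, $d0=0$ and the cocycle $dv+c$ equals $c$, so the equivalence of the previous paragraph yields $\big\|\sum_{\gamma\in K}\|c(\gamma)\|^{p-1}m(\gamma)\Re j(c(\gamma))\big\|_{B^*}\ge(C/2)\|c\|_p^{p-1}$, the case $\|c\|_p=0$ being trivial. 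Hence the cocycle inequality holds with constant $C/2$.

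For the ``if'' direction I would assume the cocycle inequality holds with some $C>0$, fix any $\alpha=\pi+c_0\in\mathcal{A}(\pi)$ and any $v\in B$ with $F_{\alpha,\ \!\! p}(v)>0$, apply the hypothesis to the cocycle $dv+c_0\in Z^1(\pi)$, and substitute into the absolute-gradient formula; this gives $|\nabla_{\!\!-}F_{\alpha,\ \!\! p}|(v)\ge 2C$. Thus condition~(\ref{item:thm:FB2}) of Theorem~\ref{thm:FB} holds, even with the uniform constant $2C$, so Theorem~\ref{thm:FB} furnishes a fixed point of $\alpha$; since $\alpha$ was arbitrary, every $\alpha\in\mathcal{A}(\pi)$ has a fixed point.

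Since Theorem~\ref{thm:FB} and Propositions~\ref{prop:agscsBsp}--\ref{prop:agucusBsp} are already available, there is no real obstacle and the argument is mostly bookkeeping. The only delicate point is the order of quantifiers on the constant: passing from a fixed point for each individual $\alpha$ to one cocycle inequality valid on all of $Z^1(\pi)$ requires the uniformity from the ``Furthermore'' clause of Theorem~\ref{thm:FB}, and one must also dispose of the harmless degenerate cases $F_{\alpha,\ \!\! p}(v)=0$ and $\|c\|_p=0$. It is worth recording that every element of $Z^1(\pi)$ is realised as $dv+c_0$ for a suitable $\alpha=\pi+c_0$ and $v\in B$ --- indeed already with $v=0$ --- so that letting $\alpha$ and $v$ vary sweeps out all of $Z^1(\pi)$, which is what genuinely identifies the two conditions.
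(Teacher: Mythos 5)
Your argument is correct and is essentially the paper's own route: substitute the absolute-gradient formula from Propositions \ref{prop:agscsBsp}--\ref{prop:agucusBsp} into condition (\ref{item:thm:FB2}) of Theorem \ref{thm:FB}, and use that cocycles of the form $dv+c_0$ (already with $v=0$) exhaust $Z^1(\pi)$. Your explicit appeal to the ``Furthermore'' uniformity clause and your handling of the degenerate cases $\|c\|_p=0$ and $F_{\alpha,\ \!\! p}(v)=0$ simply make precise what the paper leaves implicit.
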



\section{An application of the theorems to an $\ell \sp p$ space}\label{sec:Lp} 
Let $\Gamma$ be a finitely generated infinite group, $K$ a symmetric finite generating subset of $\Gamma$, $m$ a symmetric weight on $K$, and $1<p<\infty $. 

We denote by $\mathcal{F}(\Gamma)$ the space of all complex-valued functions on $\Gamma$. 
The following argument is also valid for real-valued case. 
The {\it left regular representation} $\lambda_\Gamma$ of $\Gamma$ on $\mathcal{F} (\Gamma)$ is defined by $\lambda _\Gamma (\gamma )f(\gamma ')= f(\gamma ^{-1}\gamma ')$ for each $f\in \mathcal{F}(\Gamma)$ and each $\gamma ,\gamma '\in \Gamma $. 
We define a linear map $d$ on $\mathcal{F}(\Gamma )$ by $df(\gamma ):= \lambda_{\Gamma} (\gamma )f-f $ for each $f\in \mathcal{F}(\Gamma )$ and $\gamma \in \Gamma$. 
The Lebesgue space $\ell \sp p(\Gamma)$ is the Banach space $\{ f\in \mathcal{F}(\Gamma ): \sum_{\gamma \in \Gamma }|f(\gamma )|^p <\infty \} $ with the norm $\| f\|_{\ell \sp p(\Gamma )} := (\sum_{\gamma \in \Gamma }|f(\gamma )|^p)^{1/p}$. 
The restriction of $\lambda_\Gamma$ to $\ell \sp p(\Gamma)$ is a linear isometric action without non-zero invariant vectors, and we denote it by $\lambda_{\Gamma , p}$. 

We say that $f\in \mathcal{F}(\Gamma )$ is {\it $p$-Dirichlet finite} if $df(\gamma )\in \ell \sp p(\Gamma)$ for each $\gamma \in K$, and we denote by $D_p(\Gamma)$ the space of all $p$-Dirichlet finite functions. 
The space $\ell \sp p (\Gamma)$ is a subspace of $D_p(\Gamma)$. 
The space of all constant functions on $\Gamma $ is also a subspace of $D_p(\Gamma )$, and is regarded as $\mathbb{C}$. 
Since this is the kernel of $d$, we can define a norm on $D_p(\Gamma)/\mathbb{C}$ by 
$$\| f\|_{D_p(\Gamma )} = \left( \sum_{\gamma \in K} \| df(\gamma )\|_{\ell \sp p(\Gamma)}^p m(\gamma) \right)^{1/p} .$$ 
Since 
\begin{eqnarray*}
\lambda_{\Gamma , p}(\gamma ) df(\gamma ') + df(\gamma ) 
&=& \lambda_{\Gamma}(\gamma )\lambda_{\Gamma}(\gamma ')f - \lambda_{\Gamma}(\gamma )f  
+ \lambda_{\Gamma}(\gamma )f -f \\ 
&=& \lambda_{\Gamma}(\gamma \gamma ')f - f\\ 
&=& df(\gamma \gamma ') 
\end{eqnarray*}
for all $f\in D_p(\Gamma )$ and $\gamma ,\gamma '\in \Gamma $, we obtain $df\in Z^1(\lambda_{\Gamma , p})$ for $f\in D_p(\Gamma)$. 

Furthermore, it is proven by Puls in \cite{bibPul03} and \cite{bibPul06} that $d(D_p(\Gamma ))= Z^1(\lambda_{\Gamma , p})$. 
Recall that $B^1(\lambda_{\Gamma , p})=d(\ell \sp p(\Gamma))$. 
Therefore $d$ induces an isometric isomorphism from $D_p(\Gamma )/\mathbb{C}$ onto $ Z^1(\lambda_{\Gamma ,\ \!\! p})$ and a linear 
isomorphism from $D_p(\Gamma)/(\ell \sp p(\Gamma) \oplus \mathbb{C})$ onto $H\sp 1(\Gamma,\lambda _\Gamma)$. 
Hence, for any affine isometric action $\alpha $ on $\ell \sp p(\Gamma)$ with the linear part $\lambda_{\Gamma ,\ \!\! p}$, there exists a unique $f_{\alpha }\in D_p(\Gamma)$ up to constant functions such that the cocycle part $c$ of $\alpha $ coincides with $df_{\alpha}$ and $\| c\|_p=\| f_{\alpha } \|_{D_p(\Gamma )}$. 
In particular, $f_{\lambda_{\Gamma ,\ \!\! p}}\equiv 0$. 

The $p$-Laplacian $\Delta _pf$ of $f\in D_p(\Gamma)$ is defined by 
\begin{eqnarray*}
\Delta _pf(x) 
:= \sum_{\gamma \in K} 
|df(\gamma )(x)|^{p-2}(df(\gamma )(x))m(\gamma )  
\end{eqnarray*}
where for $p<2$ we set $|df(\gamma )(x)|^{p-2}=0$ whenever $|df(\gamma )(x)|=0$. 
Since $$F_{\alpha ,\ \!\! p}(f) = \| df + df_{\alpha }\|_p = \| f+f_{\alpha}\|_{D_p(\Gamma )}$$ for all $f\in \ell \sp p(\Gamma )$, using Corollary \ref{cor:agLpsp}, we have 
\begin{eqnarray*}
|\nabla _{\!\!-}F_{\alpha,\ \!\! p}|(f) 
=\frac{2\|\Delta_p(f+f_{\alpha})\|_{\ell \sp q(\Gamma )}} 
{\| f+f_{\alpha}\|_{D_p(\Gamma)}^{p-1}} 
\end{eqnarray*} 
for all $f\in \ell \sp p(\Gamma )$ such that $F_{\alpha ,\ \!\! p}(f)>0$. 
In particular, 
\begin{eqnarray*}
|\nabla _{\!\!-}F_{\lambda_{\Gamma , p},\ \!\! p}|(f) 
=\frac{2\|\Delta_pf\|_{\ell \sp q(\Gamma )}} 
{\| f\|_{D_p(\Gamma)}^{p-1}} 
\end{eqnarray*} 
for all $f\in \ell \sp p(\Gamma )$ such that $F_{\lambda_{\Gamma , p} ,\ \!\! p}(f)>0$. 
Hence Theorem \ref{thm:TB} implies  

\begin{corollary}\label{cor:3}
The following are equivalent. 
\begin{enumerate} 
\item 
There is a positive constant $C'$ such that every $f\in \ell\sp p(\Gamma )$ satisfies $$\max_{\gamma \in K}\| \lambda_{\Gamma ,\ \!\! p}(\gamma)f-f\|_{\ell \sp p(\Gamma )}\ge C'\|f\|_{\ell \sp p(\Gamma )}.$$ 
\item \label{item:IntroBAGP} 
There is a positive constant $C$ such that every $f\in \ell\sp p(\Gamma )$ satisfies  $$\|\Delta_pf\|_{\ell \sp q(\Gamma )} \ge C\| f\|_{D_p(\Gamma)}^{p-1},$$ 
where $q$ is a conjugate exponent of $p$.  
\end{enumerate} 
\end{corollary}
By the proof of Theorem \ref{thm:TB}, if $C''>0$ satisfies $C''\le C$, then $C''$ satisfies condition $(${\rm ii}$)$ as $C$. 
For $g\in \ell\sp q(\Gamma )$ and $f\in \ell\sp p(\Gamma )$, set $\langle g,f\rangle :=\sum_{\gamma \in \Gamma }g(\gamma )f(\gamma )$. 
Assume that there is a positive constant $C$ such that every $f\in \ell\sp p(\Gamma )$ satisfies $\langle \Delta_pf,f\rangle\ge C\| f\|^p_{\ell\sp p(\Gamma )}$. 
Then, using H${\rm \ddot{o}}$lder's inequality, we easily deduce condition (i) in Corollary \ref{cor:3}. 
On the other hand, for $f\in \ell \sp p(\Gamma )$
$$\| f\|_{D_p(\Gamma)}
=F_{\lambda_{\Gamma , p},\ \!\! p}(f)
\ge \max_{\gamma \in K}\| \lambda_{\Gamma ,\ \!\! p}(\gamma)f-f\|_{\ell \sp p(\Gamma )}/|K|^{1/p}. $$ 
Therefore, if condition (i) and condition (ii) in Corollary \ref{cor:3} holds, 
then there is a positive constant $C''$ such that every $f\in \ell\sp p(\Gamma )$ satisfies  $\|\Delta_pf\|_{\ell \sp q(\Gamma )} \ge C''\| f\|_{\ell \sp p(\Gamma)}^{p-1}$.  
In particular, if $p=2$, these represent a lower estimation of the spectrum of $\Delta_2$. 
 
Theorem \ref{thm:FB} implies 
\begin{corollary}\label{cor:4}
The following are equivalent. 
\begin{enumerate}
\item Every $\alpha \in \mathcal{A}(\lambda_{\Gamma ,\ \!\! p})$ has a fixed point. 
\item There is a positive constant $C$ such that every $f\in D_p(\Gamma)$ satisfies  $$\left\| \Delta _pf \right\|_{\ell \sp q(\Gamma )} \ge C \| f\|_{D_p(\Gamma )}^{p-1} ,$$ 
where $q$ is the conjugate exponent of $p$.  
\end{enumerate}
\end{corollary}

In particular, if $p=2$, condition (ii) in Corollary \ref{cor:4} can be  regarded as representing a lower estimation of the spectrum $\Delta _2$ with respect to an inner product on $D_2(\Gamma )$.

\vspace{5mm}
\noindent 
Mamoru Tanaka,\\ 
Advanced Institute for Materials Research, Tohoku University, Sendai, 980-8577 Japan\\ 
E-mail: mamoru.tanaka@wpi-aimr.tohoku.ac.jp

\end{document}